\theoremstyle{plain}
\newtheorem{lemma}{Lemma}[section]
\newtheorem{proposition}[lemma]{Proposition}
\newtheorem{corollary}[lemma]{Corollary}
\newtheorem{theorem}[lemma]{Theorem}
\theoremstyle{definition}
\newtheorem{remark}[lemma]{Remark}
\newtheorem{example}[lemma]{Example}
\newtheorem{assumption}[lemma]{Assumption}
\newcommand{\R}{\mathbb{R}}
\newcommand{\N}{\mathbb{N}}
\newcommand{\dd}{\textnormal{d}}
\newcommand{\bx}{\mathbf{x}}
\newcommand{\by}{\mathbf{y}}
\newcommand{\bZ}{\mathbf{Z}}
\newcommand{\bR}{\mathbf{R}}
\newcommand{\bz}{\mathbf{z}}
\newcommand{\bp}{\mathbf{p}}
\newcommand{\bq}{\mathbf{q}}
\newcommand{\br}{\mathbf{r}}
\newcommand{\be}{\mathbf{e}}
\newcommand{\bzero}{\mathbf{0}}
\newcommand{\bone}{\mathbf{1}}
\newcommand{\blambda}{\bm{\lambda}}
\newcommand{\bmu}{\bm{\mu}}
\newcommand{\nnR}{\R_{\geq 0}}
\newcommand{\bulletized}[1]{{#1}_{\bullet \bullet}}  
\newcommand{\bulletize}[1]{{#1}_{\bullet}}
\newcommand{\kapemp}{\bar{I}_{\kappa} }
\newcommand{\typ}{D}
\DeclareSymbolFont{bbold}{U}{bbold}{m}{n}
\DeclareSymbolFontAlphabet{\mathbbold}{bbold}
\newcommand{\ind}{\mathbbold{1}}
\renewcommand{\P}{\mathbb{P}}
\newcommand{\E}{\mathbb{E}}
\newcommand{\Mp}{\mathcal{M}_1}
\newcommand{\Mpn}{\mathcal{M}_{1,N}}
\newcommand{\supp}{\mathrm{supp}}
\newcommand{\birth}{b}
\newcommand{\death}{d}
\newcommand{\mut}{m}
\begin{document}

\begin{frontmatter}
	\title{Mean-field interacting multi-type birth-death processes with a view to applications in phylodynamics}

	\author[1]{William S. DeWitt}
	\ead{wsdewitt@berkeley.edu}
	\author[2]{Steven N. Evans\corref{cor1}}
	\ead{evans@stat.berkeley.edu}
	\author[2]{Ella Hiesmayr}
	\ead{ella.hiesmayr@berkeley.edu}
	\author[2]{Sebastian Hummel}
	\ead{shummel@berkeley.edu}

	\affiliation[1]{organization={Department of Electrical Engineering \& Computer Sciences, University of California, Berkeley}}

	\affiliation[2]{organization={Department of Statistics, University of California, Berkeley}}

	\cortext[cor1]{Corresponding author}

	\begin{abstract}
	Multi-type birth-death processes underlie approaches for inferring evolutionary dynamics from phylogenetic trees across biological scales, ranging from deep-time species macroevolution to rapid viral evolution and somatic cellular proliferation.
	A limitation of current phylogenetic birth-death models is that they require restrictive linearity assumptions that yield tractable message-passing likelihoods, but that also preclude interactions between individuals.
	Many fundamental evolutionary processes---such as environmental carrying capacity or frequency-dependent selection---entail interactions, and may strongly influence the dynamics in some systems.
	Here, we introduce a multi-type birth-death process in mean-field interaction with an ensemble of replicas of the focal process.
	We prove that, under quite general conditions, the ensemble's stochastically evolving interaction field converges to a \emph{deterministic} trajectory in the limit of an infinite ensemble.
	In this limit, the replicas effectively decouple, and self-consistent interactions appear as nonlinearities in the infinitesimal generator of the focal process.
	We investigate a special case that is rich enough to model both carrying capacity and frequency-dependent selection while yielding tractable message-passing likelihoods in the context of a phylogenetic birth-death model.
	\end{abstract}

\end{frontmatter}

\section{Introduction}

\subsection{The multi-type birth-death process}

The \emph{multi-type birth-death process} (MTBDP) is a continuous-time Markov chain generalizing the classical birth-death process \citep{feller1968introduction,Kendall1948} to a finite number of \emph{types}.
The state of the MTBDP counts the number of individuals (or \emph{particles}) of each type while they undergo birth, death, and type transition events according to specified rates, which may be arbitrary functions of the current state and of time.
If these rates are linear in the state, the MTBDP can be formulated as a branching process \citep{Griffiths1973}.
If additionally, the rates for each type are proportional to the count of only that type, the MTBDP is said to be \emph{simple}, and the rates can be specified particle-wise because particles do not interact.
The general case of nonlinear rates has also been called a \emph{multivariate competition process} \citep{reuter1961competition, Iglehart1964}, which, as noted by \citet{ho2018birth}, is more restrictive than a multi-type branching process in that the latter allows for increments other than unity, and more general in that the latter is manifestly linear via its defining independence property.

\subsection{Phylogenetic birth-death models}

The MTBDP has facilitated the inference of diversification processes in biological systems, with applications ranging across scales of evolutionary time and biological organization.
\emph{Phylogenetic birth-death} models assume that a phylogenetic tree is generated by an MTBDP combined with a sampling process that censors subtrees that are not ancestral to any sampled leaves, so that histories are only partially observed.
The diverse flavors of these models are reviewed and introduced with unified notation in \citet{MacPherson2021}.
Given a phylogeny, the inferential targets are the birth and death rates, as well as the type transition rates.
Birth and death are variously interpreted as extinction and speciation rates in the context of macroevolutionary studies, or as transmission and recovery rates in the context of epidemiological or viral phylodynamic studies.
The literature contains many variants of this modeling approach.
Depending on the application, the birth and death rates may be assumed to be time-dependent, depend on particle type, or both.

To facilitate tractable likelihoods, phylogenetic birth-death models assume the restrictive non-interacting simple MTBDP, with particle-wise birth and death rates that depend only on particle type, and possibly on time.
In this case, given a time-calibrated tree, the likelihood---defined via the conditional density of the tree assuming it has at least one sampled descendant---can be evaluated via tree message-passing computations.
This message-passing structure can be seen to follow from elementary properties of branching processes, adapted to partial tree observation.
The message functions \cite[in work by][these are called \emph{branch propagators}]{neher2014predicting} are given by the solutions to master equations that marginalize over all possible unobserved subtrees subtending the branch, and are computed recursively via post-order traversal (from tree tips to root).

\subsection{Biology involves interactions}

Despite the robust computational development and wide usage of phylogenetic birth-death models for phylodynamic inference, their biological expressiveness is limited by the assumption that particles do not interact.
Interactions may be essential to evolutionary dynamics.
For example, environmental carrying capacity is a fundamental constraint on the long-term dynamics of any evolving population, and models of experimental microbial evolution generally allow for a transition from exponential growth to stationary phase as the population approaches capacity \citep{Baake2019}.
As another example, although the simple MTBDP facilitates modeling phenotypic selection via type-dependent birth and death rates, this does not capture \emph{frequency-dependent selection}, where the fitness of a given type depends on the distribution of types in the population.
In both of these examples, birth and death rates depend on the state of the population process, and this breaks the tree message-passing structure that phylogenetic birth-death models rely on.

As a motivating biological setting for the ideas to follow, 
we consider the somatic evolutionary process of \emph{affinity maturation} of antibodies 
in micro-anatomical structures called \emph{germinal centers} (GCs), 
which transiently form in lymph nodes during an adaptive immune response \cite[reviewed in][]{Victora2014-hm,Mesin2016-yu,Shlomchik2019-bo,victora2022germinal,Liu2023-wf}.
In a GC, B cells---the cells that make antibodies---diversify and compete based on the ability of the antibodies they express to recognize a foreign \emph{antigen} molecule.
As GC B cells proliferate, they undergo targeted mutations in the genomic locus encoding the antibody protein that can modify its antigen binding affinity (they undergo type transitions).
Via signaling from other GC cell types, the GC is able to monitor the binding phenotype of the B-cell population it contains, and provide survival signals to B cells with the highest-affinity antibodies (i.e., birth and death rates depend on type).

GCs have been studied extensively in mouse models that allow for experimental lineage tracing and manipulation of the B-cell population process.
In particular, B cells can be \emph{fate mapped} by genetically engineering them to express a fluorescent protein that marks them with a randomized color at the beginning of the GC evolutionary process \citep{Tas2016, Mesin2020, Pae2021-uu}.
These initially random colors are non-randomly inherited by descendant cells, so a sample of the GC B-cell population at a future time can be partitioned into \emph{lineages} of cells that share distinct common ancestors at the time of the initial color marking.
Phylogenetic inference can then be used to reconstruct the evolutionary history of a GC B-cell lineage using the DNA sequences of the sampled B cells \citep{DeWitt2018}.

GC B cells compete for limited proliferative signaling based on the antigen binding affinity of their B-cell receptors, and the population distribution of binding affinities generally improves as affinity maturation unfolds, so a given binding phenotype may be high-fitness early in the process, but low-fitness later when the population distribution of affinity has improved.
This invokes frequency-dependent selection, where the birth and death rates should depend on the population distribution of types.
GCs are observed to reach a steady-state carrying capacity of several thousand cells, based on limited cell-mediated proliferative signaling, so carrying capacity is likely also important, meaning that birth and death rates should depend on the total population size.

Phylodynamic models have the potential to reveal how evolutionary dynamics is orchestrated in GCs to shape antibody repertoires and immune memories.
However, phylogenetic birth-death models cannot accommodate key features of this system.
\citet{amitai2017population} presented a simulation study using a birth-death model with competition to investigate features of the GC population process, but such agent-based simulations are not amenable to likelihood-based inference for partially observed histories.
This motivates us to investigate a class of interacting MTBDPs that preserve tree-message passing for tractable likelihoods, and could thus be used in phylogenetic birth-death models.

\subsection{Mean-field interactions between replica birth-death processes}

Mean-field theories are a fundamental conceptual tool in the study of interacting particle systems.
The ideas originated in statistical physics and quantum mechanics as a technique to reduce many-body problems---in fluids, condensed matter, and disordered systems---to effective one-body problems \cite[see][]{Parisi2007, kadanoff2009more}.
The theory was extensively developed in the context of general classes of stochastic processes, and has since been widely applied across many scientific domains \cite[see][for a review of theory and applications]{MR4489768, MR4489769}.

Motivated by the setting of GC evolutionary dynamics described above, with population-level interaction among many fate-mapped lineages, we set out to develop a mean-field model that couples the birth and death rates in a focal MTBDP (with $d$ types) to the empirical distribution of states---i.e., the \emph{mean-field}---over an exchangeable system of $N$ replica MTBDPs.
More concretely, this empirical distribution process is a stochastic process taking values in the space of probability measures on $\mathbb{N}_0^\typ$, where $\N_0$ denotes the non-negative integers: the mass assigned by this measure-valued process at time $t \ge 0$ to a vector $\by = (y_1, y_2, \ldots, y_\typ) \in \mathbb{N}_0^\typ$ is the proportion of replica processes (including the focal process) that have $y_k$ individuals of type $k$ for $1 \le k \le \typ$.

We prove that the empirical distribution process of the $N$ replicas converges to a deterministic probability measure-valued flow as $N \to \infty$.
Using the \emph{propagation of chaos} theory \cite[see][for surveys of this vast area and references to its many applications]{MR4489768, MR4489769, Sznitman1991} we moreover show that in this limit, the replicas effectively decouple, and the focal process can instead be said to couple to a deterministic external field.
This external field is \emph{self-consistent} in the sense that, at any time $t \ge 0$, it is given by the very distribution of the state of the focal process.
We calculate self-consistent fields by solving limiting nonlinear forward equations for the focal process.
A key feature of this limit is that it restores message-passing likelihoods in the phylogenetic birth-death model setting, allowing for tractable phylodynamic models with interactions.

We note that there has been some work on mean-field models in the area of superprocesses (continuum analogs of branching processes) -- see \citet{overbeck1995superprocesses, Overbeck_96}. Finally, \citet{thai2015birth} is tangentially related to our work in that it treats a particular question concerning mean-field interacting single-type birth-death processes.
As the author of this paper observes regarding the literature about mean-field models and propagation of chaos,
``\textellipsis there are few results in discrete space.''

\subsection{Structure of the paper}
The rest of the paper is structured as follows. In Section \ref{sec:theory} we construct a system of MTBDPs that can model the properties and interactions between particles we have discussed so far. Moreover, this section contains our main theoretical results. Section~\ref{sec:proofs} is dedicated to their proofs. In Section~\ref{sec:examples} we analyze a special case of an MTBDP system numerically. Finally, in Section~\ref{sec:discussion}, we discuss our findings, and compare them to other relevant results from the literature.

\section{Theoretical results: a mean-field interacting multi-type birth-death process with general rates}
\label{sec:theory}

We start by describing a finite system of fairly general symmetrically interacting MTBDPs for which the interaction may be
locally strong but is 
globally weak in the sense that different MTBDPs interact only via the empirical distribution of their states.
Ultimately, we are interested in the joint law of a finite number of focal processes within an infinite system of such mean-field interacting MTBDPs.
To this end, we establish that the process of the empirical distribution of families converges to a deterministic probability measure-valued flow. 
Any finite number of MTBDPs become asymptotically independent and identically distributed.
In the limit, the law of any given focal process can be described by a time-inhomogeneous MTBDP.
The time inhomogeneity comes from the deterministic probability measure-valued background flow 
that also describes the one-dimensional marginal distributions of the focal process.

One main contribution of our analysis compared to previous studies is that we allow for a quite general transition rate structure. 
The rate of a single MTBDP is only restricted to be of at most linear growth 
and Lipschitz continuous.
To deal with the technical challenges that come with these general assumptions, 
we employ a localization technique and approximate the general system by one that has
bounded transition rates. 
A key feature is that the system with bounded rates is in a certain sense close to the one with unbounded rates, uniformly in the system size.

Consider a finite set of types $\{1,\ldots,\typ\}\eqqcolon[\typ]$. 
The application we have in mind is that each type represents a certain affinity of B-cell receptors. 
We equivalently refer to the cells as particles, 
in line with the terminology used in the branching process literature. 
At the outset, let's envision a germinal center that initially contains a finite collection of $N\in \N$ such B-cells. 
The progeny process of each of the~$N$ founding cells in this GC can be modeled as an MTBDP.
During the process of antibody affinity maturation, cells can divide into two daughters of the same type, mutate to one of the other~$(\typ-1)$ affinity types, or die, according to specified rates.
The interaction within lineages is (possibly) \emph{strong}, 
whereas the interaction between the $N$ lineages is \emph{weak}. 
This means that the rates for the $j$th lineage depend on its state (locally-strong)
and on the empirical distribution of MTBDP states over the $N$ lineages (globally-weak). 
Note that this includes the special case of rates that depend on the global empirical type distribution aggregated over all $N$ families in the GC.
Initially, there are $N\in \N$ founding particles within the GC.
A state of this system is then given by $\bz=(\bz_{1},\ldots,\bz_{N})\in (\N_0^{\typ})^N$, 
where for $j\in [N]$ and $i\in[\typ]$, 
$z_{j,i}$ counts the number of type-$i$ particles in the $j$th MTBDP.
Let $\Mp(\N_0^{\typ})$ denote the probability measures on $\N_0^{\typ}$.
This space is embedded into the Banach space of finite signed measures on $\N_0^{\typ}$ equipped with the total variation norm.
For $\nu\in \Mp(\N_0^{\typ})$ and $\by\in \N_0^{\typ}$, 
let $\nu_{\{\by\}}\coloneqq \nu(\{\by\})$. 
Then the total variation distance between $\nu,\nu'\in \Mp(\N_0^{\typ})$ is 
$\lVert \nu -\nu'\rVert_{TV} \coloneqq \frac{1}{2}\sum_{\by\in \N_0^{\typ}}\lvert \nu_{\{\by\}}-\nu_{\{\by\}}'\rvert.$ 

Let $\Mpn(\N_0^{\typ})\coloneqq\{\frac{1}{N} \sum_{j=1}^N \delta_{\bz_{j}}\in\Mp(\N_0^{\typ}):\, \bz\in (\N_0^{\typ})^N\}$, i.e. 
the probability measures that can arise as an empirical distribution of an MTBDP system with~$N$ initial particles. 
The \emph{empirical distribution of MTBDP states} of an $N$-system in state~$\bz$ is 
\[\pi_{\bz} \coloneqq \frac{1}{N} \sum_{j=1}^N \delta_{\bz_{j}}\in \Mpn(\N_0^{\typ}).\]
For example, for $\by=(y_1,\ldots,y_\typ)\in \N_0^{\typ}$ and $\bz\in (\N_0^{\typ})^N$, 
$\pi_{\bz}(\{\by\})$ counts the relative frequency of lineages with composition~$\by$, 
i.e.\ with $y_i$ particles of type~$i$, $i\in[\typ]$.

Every successive change in the system affects only one particle at a time with a rate depending on the local state of its lineage, 
and the empirical distribution over the population of $N$ lineages. 
That is, for $i \ne k\in [\typ]$, we have the following per lineage rates of various events
\begin{align*}
	b^i&:\N_0^{\typ}\times \Mp(\N_0^{\typ})\to \R_+\qquad (\textit{birth-rate of type-$i$ particles}),\\
	d^i&:\N_0^{\typ}\times\Mp(\N_0^{\typ})\to \R_+ \qquad (\textit{death-rate of type-$i$ particles}), \\
	m^{i,k}&:\N_0^{\typ}\times \Mp(\N_0^{\typ})\to \R_+ \qquad (\textit{mutation-rate from type-$i$ to type-$k$ particles}).
\end{align*}
Throughout we assume that if $z_{{j},i}=0$, 
then $b^i(\bz_{j},\pi_{\bz}) = d^i(\bz_{j},\pi_{\bz})=m^{i,k}(\bz_{j},\pi_{\bz})=0$ for all $k \in [D]$. We stress that the rates do not depend on $N$.

We will assume that the rates per lineage grow at most linearly with the number of particles in the lineage and that the rates are Lipschitz continuous in the following sense. 
For $\by\in \N_0^{\typ}$, set $\bulletize{\by} := \sum_{i \in [\typ]} y_i\in \N_0$.

\begin{assumption}\label{ass:growthlip}
	\begin{enumerate}[label=(\textbf{A.\arabic*})]
		\item \label{ass:growth} There exists a constant $L$ such that for all $i, k \in [\typ]$, $i\ne k$, $\by\in \N_0^{\typ}$ and $\nu\in \Mp(\N_0^{\typ})$,
		\begin{equation*}
			\begin{split}
				b^i(\by,\nu) \leq L (\bulletize{\by}+1),\quad 
				d^i(\by,\nu) \leq L (\bulletize{\by}+1),\quad 
				m^{i,k}(\by,\nu)\leq L (\bulletize{\by}+1).
			\end{split}
			\label{cond:lineargrowth}
		\end{equation*}
		\item \label{ass:lipschitz} There exists a constant $L$ such that for all $i, k \in [\typ]$, $i\ne k$, $\by,\by'\in \N_0^{\typ}$ and $\nu,\nu'\in \Mp(\N_0^{\typ})$,
		\begin{equation*}
			\begin{split}
				\lvert b^i(\by,\nu)-b^i(\by',\nu')\rvert &\leq L (\bulletize{|\by-\by'|}+\lVert \nu-\nu'\rVert_{TV}) ,\\
				\lvert d^i(\by,\nu) -d^i(\by',\nu')\rvert &\leq L (\bulletize{|\by-\by'|}+\lVert \nu-\nu'\rVert_{TV}),\\
				\lvert m^{i,k}(\by,\nu)-m^{i,k}(\by',\nu')\rvert &\leq L (\bulletize{|\by-\by'|}+\lVert \nu-\nu'\rVert_{TV}).
			\end{split}
			\label{cond:lipschitz}
		\end{equation*}
	\end{enumerate}
\end{assumption}

\begin{remark}
    To fully model features like carrying capacity constraints, an alternative would be to allow the rates to grow linearly with the mean of the measure, rather than bounding the contribution of the measure by a constant. In this case, the Lipschitz bounds would also depend on something like the Wasserstein-$1$ distances between the two measures involved. Proving similar results as ours under such assumptions is an open problem that we hope to return to in future work.
\end{remark}

The system of MTBDPs is formally described through its infinitesimal generator, which requires some notation. 
To add and remove particles of type~$i$ in the $j$th MTBDP,
we use $\be_{j,i}\in (\N_0^{\typ})^N$, 
where $(\be_{j,i})_{k,\ell}=\ind_{k}(i)\ind_{\ell}(j)$. 
The domain of the generator is described using specific function spaces.
We write $\bar{C}((\N_0^{\typ})^N)$ for the space of (continuous) bounded functions 
on $(\N_0^{\typ})^N$ and $\hat{C}((\N_0^{\typ})^N)$ for the space of 
(continuous) bounded functions on $(\N_0^{\typ})^N$ that vanish at infinity.
Moreover, we write $C_c((\N_0^{\typ})^N)$ for the space of compactly supported (finitely supported) (continuous) functions on $(\N_0^{\typ})^N$. For $\bz\in (\N_0^{\typ})^N$, set $\bulletized{\bz} \coloneqq \sum_{j\in [N]} \bulletize{(\bz_{j})} = \sum_{j \in [N]} \sum_{i \in [\typ]} z_{j,i} \in \N_0$. 

The generator $A^N$ of the finite system of interacting MTBDPs acts on $f \in \hat{C}((\N_0^{\typ})^N)$
via $A^Nf(\bz):=\sum_{j=1}^N (A_{\birth}^{N,j}+A_{\death}^{N,j}+A_{\mut}^{N,j})f(\bz)$ with
\begin{align*}
	A_\birth^{N,j}f(\bz)&:=\sum_{i=1}^{\typ} b^i(\bz_{j},\pi_{\bz})[f(\bz+\be_{j,i})-f(\bz) ]\\
	A_\death^{N,j}f(\bz)&:=\sum_{i=1}^{\typ} d^i(\bz_{j},\pi_{\bz})[f(\bz-\be_{j,i})-f(\bz) ]\\
	A_\mut^{N,j}f(\bz)&:=\sum_{i,k \in [\typ], \, i \ne k}  m^{i,k}(\bz_{j},\pi_{\bz})[f(\bz+\be_{j,k}-\be_{j,i})-f(\bz) ].
\end{align*}

Define \[\Delta_N\coloneqq\{f \in \hat{C}((\N_0^{\typ})^N):\ \bz \mapsto \bz_{\bullet \bullet} f(\bz) \in \bar{C}((\N_0^{\typ})^N),\ A^N f \in \hat{C}((\N_0^{\typ})^N)\}.\]

\begin{proposition}[Feller property for finite system] \label{prop:fellerfinite}

The closure of $\{(f, A^N f): f \in \Delta_N\}$ is single-valued and
generates a Feller semigroup on $\hat{C}((\N_0^{\typ})^N)$.
Moreover, $C_c((\N_0^{\typ})^N)$ is a core for this generator.
\end{proposition}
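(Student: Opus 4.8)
The plan is to verify the hypotheses of a standard existence theorem for Feller semigroups generated by (possibly unbounded) conservative $q$-matrices on a countable state space, such as the Hille--Yosida-type criteria in Ethier--Kurtz or the explicit construction for countable-state Markov chains. The state space $(\N_0^d)^N$ is countable and locally compact, so $\hat C((\N_0^d)^N)$ is the natural Banach space and a Feller semigroup corresponds to an honest (non-explosive) minimal chain whose generator agrees with $A^N$ on a suitable core. The two things to establish are therefore: (i) the pre-generator $(f, A^N f)$ on a natural domain is closable and its closure is the generator of a strongly continuous positive contraction semigroup; and (ii) the associated process is non-explosive, so that the semigroup is conservative and $C_c((\N_0^d)^N)$ is a core. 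I would take the domain $\Delta_N$ (or just $C_c$) as the operator core to be exhibited at the end.

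First I would record that, by Assumption~\ref{ass:growth}, for $f \in C_c((\N_0^d)^N)$ the total outflow rate from state $\bz$,
\[
q(\bz) := \sum_{j=1}^N \sum_{i=1}^d \bigl( b^i(\bz_j,\pi_{\bz}) + d^i(\bz_j,\pi_{\bz}) + \sum_{k \ne i} m^{i,k}(\bz_j,\pi_{\bz}) \bigr) \le C(\bzero \vee \bulletized{\bz} + 1)
\]
for a constant $C = C(L,d,N)$, so $A^N f$ is well-defined and finitely supported-dominated, and $q(\bz) < \infty$ for every $\bz$: the $q$-matrix is stable and conservative (the boundary convention $d^i = m^{i,\cdot} = 0$ when $z_{j,i}=0$ ensures the chain stays in $(\N_0^d)^N$). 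This gives a minimal Markov transition function and hence a positive contraction semigroup on $\hat C$; the positive maximum principle holds for $A^N$ on $C_c$ by the usual computation (at a maximum of $f$, each bracketed difference $f(\bz \pm \cdots) - f(\bz)$ is $\le 0$ against a nonnegative rate). The real content is \textbf{non-explosion}, and this is the step I expect to be the main obstacle. I would prove it via a Lyapunov/Foster--Lyapunov argument: take $V(\bz) := \bulletized{\bz} = \sum_{j,i} z_{j,i}$ and compute
\[
A^N V(\bz) = \sum_{j=1}^N \sum_{i=1}^d \bigl( b^i(\bz_j,\pi_{\bz}) - d^i(\bz_j,\pi_{\bz}) \bigr),
\]
since each birth increments $V$ by $1$, each death decrements it by $1$, and each mutation leaves $V$ unchanged. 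By Assumption~\ref{ass:growth} this is bounded above by $L \sum_{j=1}^N (\bulletize{\bz_j} + 1) = L(\bulletized{\bz} + N) \le L' (V(\bz) + 1)$. Since $V$ has compact (finite) sublevel sets and $A^N V \le L'(V+1)$, the standard non-explosion criterion (e.g. the drift condition of Meyn--Tweedie, or Ethier--Kurtz Ch.~8, or a direct Gronwall estimate on $\E[V(\bz(t))]$ along the minimal chain stopped at explosion) shows the minimal chain is non-explosive. Hence the minimal semigroup is conservative, i.e. a genuine Feller semigroup on $\hat C((\N_0^d)^N)$, and because the rates do not depend on $t$ the generator is a single-valued closed operator whose closure is obtained from its restriction to any sufficiently rich set of functions.

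To finish, I would identify the core. For a conservative, stable, non-explosive $q$-matrix the minimal process is the unique one with that generator, and $C_c((\N_0^d)^N)$ is a core for the generator: indeed $C_c$ is dense in $\hat C$, it is mapped into $\hat C$ by $A^N$ (finitely many nonzero values, each finite), and one checks that $\lambda - A^N$ has dense range on $C_c$ for some $\lambda > 0$ — equivalently, that the resolvent applied to $C_c$ functions can be approximated within $C_c$, which follows from non-explosion and the countable-state structure (this is exactly the content of the classical result that the minimal chain's generator is the closure of its restriction to finitely supported functions). Consequently the closure of $\{(f,A^N f): f \in \Delta_N\}$ — which contains $C_c$ — is single-valued and generates this Feller semigroup, and $C_c((\N_0^d)^N)$ is a core, as claimed. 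The only genuinely delicate point beyond bookkeeping is the non-explosion estimate above; everything else is an application of the standard theory of countable-state Feller processes to the linear-growth drift bound supplied by Assumption~\ref{ass:growthlip}.
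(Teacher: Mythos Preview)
Your proposal is correct and follows the same underlying strategy as the paper, though you unpack by hand what the paper obtains from a packaged result. The paper simply verifies the hypotheses of Theorem~3.1 in Chapter~8 of Ethier--Kurtz, taking the weight functions $\gamma(\bz)=\eta(\bz)=\bulletized{\bz}\vee 1$ (your Lyapunov function $V$, up to the harmless $\vee 1$): hypothesis~(3.2) is the bound on total jump rate over $\chi$ coming from \ref{ass:growth}, hypothesis~(3.3) is that jumps change $\bulletized{\bz}$ by at most one, and (3.4)--(3.5) follow from these two. Your Foster--Lyapunov estimate $A^N V \le L'(V+1)$ is precisely what those hypotheses encode, and the core statement for $C_c((\N_0^d)^N)$ is part of the theorem's conclusion rather than something the paper argues separately. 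So the two proofs coincide in substance; the paper's is shorter because it offloads the Hille--Yosida and non-explosion machinery to the cited theorem, while yours reconstructs that machinery explicitly.
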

 
The proof of the proposition is in \S~\ref{sec:propertiesfinite}. 
Write
\[\bZ^N(t) \coloneqq (\bZ^{N}_1(t),\ldots,\bZ^{N}_N(t))
\] 
for a process with the semigroup guaranteed by Proposition~\ref{prop:fellerfinite}
and set $\bZ^N\coloneqq(\bZ^{N}(t))_{t\geq 0}$.

The system exhibits exchangeability among the MTBDPs due to the symmetries of the rates, 
provided that their initial distribution is also exchangeable.
To formally establish this property, we utilize the {\em Markov mapping theorem}. 
As a result of this analysis, we also derive the Markovian nature of the system's empirical distribution process, subject to suitable initial conditions.

The empirical distribution of states in the system at time~$t \ge 0$ is 
\[\Pi^N(t) \coloneqq\frac{1}{N}\sum_{j=1}^N \delta_{\bZ_j^N(t)}.\]
The $\Mpn(\N_0^{\typ})$-valued empirical measure process is $\Pi^N=
(\Pi^N(t))_{t\geq 0}$.

Its infinitesimal generator $B^N$ acts on a subset of $\bar{C}(\Mpn(\N_0^{\typ}))$, 
the bounded continuous functions on $\Mpn(\N_0^{\typ})$.  
More specifically, 
$B^N$ acts on functions of the form
\[h(\nu)=\frac{1}{N!} \prod_{\substack{\by \in \N_0^{\typ}:\\  \by \in \supp(\nu)}}(N\nu(\{\by\}))!\sum_{\bz\in (\N_0^{\typ})^N: \pi_{\bz}=\nu} f(\bz),\] with $f\in \Delta_N$, 
via $B^Nh(\nu):=(B_\birth^{N}+B_\death^{N}+B_\mut^{N})h(\nu)$, where 
\begin{align*}
	B_\birth^{N}h(\nu)&:=\sum_{\by \in \N_0^{\typ}}\sum_{i=1}^{\typ} N \nu_{\{\by\}} b^i(\by ,\nu)\Big[h\Big(\nu+\frac{\delta_{\by +\be_i}-\delta_\by }{N} \Big)-h(\nu)\Big] , \\
	B_\death^{N}h(\nu)&:=\sum_{\by \in \N_0^{\typ}}\sum_{i=1}^{\typ} N\nu_{\{\by\}}  d^i(\by ,\nu)\Big[h\Big(\nu+\frac{\delta_{\by -\be_i}-\delta_\by }{N} \Big)-h(\nu)\Big], \\
	B_\mut^{N}h(\nu)&:=\sum_{\by \in \N_0^{\typ}}\sum_{i,k \in [\typ], \, i \ne k} N \nu_{\{\by\}} m^{i,k}(\by ,\nu)\Big[h\Big(\nu+\frac{\delta_{\by +\be_k-\be_i}-\delta_\by }{N} \Big)-h(\nu)\Big],
\end{align*}
with $\be_i$ the $i$th unit vector in $\N_0^{\typ}$.

To formally state the exchangeability of the system and the Markovian nature of~$\Pi^N$, we require some notation. Let $\alpha^N(\nu,\dd \bz)$ be a kernel from $\Mpn(\N_0^{\typ})$ to $(\N_0^{\typ})^N$ 
defined via
\[\alpha^N(\nu, \dd \bz)\coloneqq\frac{1}{N!} \prod_{\substack{\by \in \N_0^{\typ}:\\  \by \in \supp(\nu)}}(N\nu(\{\by\}))!  \sum_{\bx\in (\N_0^{\typ})^N:\, \pi_{\bx}=\nu}\delta_{\bx}(\bz),\]
i.e.\ $\alpha^N(\nu,\cdot)$ puts mass uniformly among all the system states $\bx \in (\N_0^{\typ})^N$ 
that are compatible with an empirical distribution~$\nu$.
For $f\in \bar{C}((\N_0^{\typ})^N)$, 
we write $\alpha^Nf(\cdot)=\sum_{\bz\in (\N_0^{\typ})^N} f(\bz)\alpha^N(\cdot,\dd \bz)$.
(In particular, $\alpha^Nf\in \bar{C}(\Mpn(\N_0^{\typ}))$.) 

\begin{proposition}[Exchangeability]\label{prop:exchangeability}
	Let $\nu^N \in \Mpn(\N_0^{\typ})$ and assume $\bZ^N(0)$ has distribution $\alpha^N(\nu^N,\cdot)$. 
	For all $t\geq 0$, $\bZ^N(t)=(\bZ^{N}_1(t),\ldots,\bZ^{N}_N(t))$ is exchangeable and 
	$\Pi^N$ is a Markov process with generator $B^N$.
\end{proposition}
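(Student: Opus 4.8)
Both assertions are planned to follow from a single application of the \emph{Markov mapping theorem} (in the form giving conditional distributions of a Markov process under a measurable map) to the many-to-one map $\gamma\colon(\N_0^d)^N\to\Mpn(\N_0^d)$, $\gamma(\bz):=\pi_{\bz}$, to the kernel $\alpha^N$, and to the generator $A^N$ furnished by Proposition~\ref{prop:fellerfinite}. The first observation is that, for each $\nu\in\Mpn(\N_0^d)$, the measure $\alpha^N(\nu,\cdot)$ is the uniform probability measure on the fibre $\gamma^{-1}(\{\nu\})=\{\bz:\pi_{\bz}=\nu\}$ --- equivalently, the law of a uniformly random relabelling of the $N$ lineages of any configuration with empirical distribution $\nu$. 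In particular $\alpha^N(\nu,\gamma^{-1}(\{\nu\}))=1$, and $\alpha^N(\nu,\cdot)$ is an exchangeable probability measure for every $\nu$.

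The crux is the averaging (intertwining) identity
\[
\alpha^N\!\big(A^N f\big)\;=\;B^N\!\big(\alpha^N f\big)\quad\text{on }\Mpn(\N_0^d),\qquad f\in\Delta_N.
\]
Since $C_c((\N_0^d)^N)$ is a core for $A^N$ by Proposition~\ref{prop:fellerfinite} (and $C_c\subseteq\Delta_N$, with $A^Nf\in C_c$ whenever $f\in C_c$), it suffices to check this for $f\in C_c((\N_0^d)^N)$, for which every sum involved is finite. Fix $\nu$ and average the birth part $\sum_{j=1}^N A_\birth^{N,j}f$ over the fibre: group the double sum $\sum_{\bz:\pi_{\bz}=\nu}\sum_{j=1}^N$ by the common value $\by:=\bz_{j}$, which occurs among the $N$ coordinates with multiplicity $N\nu_{\{\by\}}$. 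This rewrites the fibre-average of $\sum_{j,i}b^i(\bz_{j},\pi_{\bz})\big[f(\bz+\be_{j,i})-f(\bz)\big]$ as $\sum_{\by}\sum_{i}N\nu_{\{\by\}}\,b^i(\by,\nu)$ times the average of $f(\bz+\be_{j,i})-f(\bz)$ over fibre elements $\bz$ with $\bz_{j}=\by$; since shifting such a coordinate from $\by$ to $\by+\be_i$ carries that uniform law onto the uniform law on the fibre over $\nu+\frac{\delta_{\by+\be_i}-\delta_{\by}}{N}$, this average equals $\alpha^Nf\big(\nu+\frac{\delta_{\by+\be_i}-\delta_{\by}}{N}\big)-\alpha^Nf(\nu)$. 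Summing reproduces $B_\birth^{N}(\alpha^Nf)(\nu)$ exactly, and the death and mutation parts are identical with $\be_i$ replaced by $-\be_i$ and $\be_k-\be_i$ (the standing convention that $d^i,m^{i,k}$ vanish whenever $z_{j,i}=0$ ensuring no particle is ever removed from an empty coordinate). Carrying out this combinatorial bookkeeping correctly --- in particular tracking the fibre sizes so that the factors $N\nu_{\{\by\}}$ come out right --- is where the main effort lies.

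Granting the identity, apply the Markov mapping theorem with the Feller process $\bZ^N$ of Proposition~\ref{prop:fellerfinite} --- which, by linear growth of the rates, is non-explosive and hence genuinely $(\N_0^d)^N$-valued for all $t$ --- as the solution of the (well-posed) $A^N$-martingale problem. From $\bZ^N(0)\sim\alpha^N(\nu_0^N,\cdot)$ we get $\Pi^N(0)=\gamma(\bZ^N(0))=\nu_0^N$ a.s., and the theorem yields that $\Pi^N=\gamma(\bZ^N)$ is a Markov process solving the martingale problem for $B^N$ started at $\nu_0^N$, and that for each $t\ge0$ the conditional law of $\bZ^N(t)$ given $\sigma\big(\Pi^N(s):s\le t\big)$ equals $\alpha^N\big(\Pi^N(t),\cdot\big)$. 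The well-posedness needed here is routine: $B^N$ is a pure-jump generator on the countable space $\Mpn(\N_0^d)$ with finite total exit rate $\sum_{\by}\sum_{i}N\nu_{\{\by\}}\big(b^i+d^i+\sum_{k\ne i}m^{i,k}\big)(\by,\nu)$ from each $\nu$ (finite because $\nu$ is finitely supported), its domain $\{\alpha^Nf:f\in\Delta_N\}$ is measure-determining on $\Mpn(\N_0^d)$ (it contains singleton indicators, since $\alpha^N\ind_{\gamma^{-1}(\{\nu_0\})}=\ind_{\{\nu_0\}}$), and any solution of the $B^N$-martingale problem lifts, via the converse direction of the theorem, to one of the well-posed $A^N$-martingale problem. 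This gives the Markovianity claim. Finally, exchangeability of $\bZ^N(t)$ is immediate: its law $\int\alpha^N(\nu,\cdot)\,\P(\Pi^N(t)\in\dd\nu)$ is a mixture of exchangeable laws, hence exchangeable. (Alternatively, the $S_N$-equivariance of $b^i,d^i,m^{i,k}$ --- which depend on a lineage only through its own state and the permutation-invariant $\pi_{\bz}$ --- makes $A^N$, hence the Feller semigroup, commute with coordinate permutations, so exchangeability propagates from the exchangeable initial law $\alpha^N(\nu_0^N,\cdot)$.)
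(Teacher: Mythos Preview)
Your proposal is correct and follows essentially the same approach as the paper: both apply the Markov mapping theorem of \citet{Kurtz1998} with the map $\gamma(\bz)=\pi_{\bz}$ and kernel $\alpha^N$, verify the intertwining identity $\alpha^N(A^Nf)=B^N(\alpha^Nf)$ (the paper isolates this as Lemma~\ref{lem:generatoridentity} for product functions, you sketch it for $f\in C_c$), and then read off both the Markov property of $\Pi^N$ and the exchangeability of $\bZ^N(t)$. The only cosmetic difference is that the paper explicitly verifies the technical hypotheses of Kurtz's Corollary~3.5 (pre-generator via the positive maximum principle, Hypothesis~2.4 via local compactness), while you handle well-posedness for $B^N$ by lifting back to the well-posed $A^N$-problem.
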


In what follows, we consider the limit of large systems.
For the germinal center application, this means that we assume the initial number of B-cells to be large.
Any dependence of the rates on the \emph{total mass} therefore is meant to be relative to the initial mass.

Our first main result describes the behavior of $\Pi^N$ in the limit of large systems.  We adopt the usual notation that if $I$ is a closed subinterval of $\R_+$ and $E$ is a metric space, then $D(I, E)$ is the Skorohod space of right-continuous, left-limited functions from $I$ to $E$.

\begin{theorem}[Convergence of empirical measure process] \label{thm:conv}
	Assume $\bZ^N(0)$ has distribution $\alpha^N(\nu^N,\cdot)$ for $\nu^N\in \Mpn(\N_0^{\typ})$ satisfying $\nu^N\xrightarrow{N\to\infty} \nu\in \Mp(\N_0^{\typ})$. 
	Then there exists a unique solution to the initial value problem: $v(0)=\nu$ and for all $\by\in \N_0^{\typ}$,
	\begin{equation}
		\label{eq:limitode}
		\begin{split}
			v_{\{\by\}}'(t)=&-v_{\{\by\}}(t) \sum_{i=1}^{\typ} \big(b^i(\by,v(t))+d^i(\by,v(t))+\sum_{k=1,\, k\neq i}^{\typ} m^{i,k}(\by,v(t)) \big)\\
			&+\sum_{i=1}^{\typ} \big(v_{\{\by-\be_i\}}(t)b^i(\by-\be_i,v(t)) + v_{\{\by+\be_i\}}(t) d^i(\by+\be_i,v(t)) \\
			&\qquad \ + \sum_{k=1,\, k\neq i}^{\typ} v_{\{\by-\be_k+\be_i\}}(t)\, m^{i,k}(\by-\be_k+\be_i,v(t))\big).
		\end{split}
	\end{equation}
	(with the convention that for $\by\notin \N_0^{\typ}$, $v_{\{\by\}}(t)=b^i(\by,v(t))=d^i(\by,v(t))=m^{i,k}(\by,v(t))=0$).
	Moreover, 
	\begin{equation}
		\Pi^N\xRightarrow{N\to\infty}v
	\end{equation}
(that is, the sequence of $D(\R_+, \Mpn(\N_0^{\typ}))$-valued random elements $(\Pi^N)_{N \in \N}$ converges in distribution to the deterministic (continuous) function $v$).
\end{theorem}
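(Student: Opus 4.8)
I would read \eqref{eq:limitode} as the Kolmogorov forward equation of the limiting single‑lineage law and prove the theorem via the classical route --- well‑posedness of \eqref{eq:limitode}, tightness of $(\Pi^N)_N$, characterization of every subsequential limit as a solution of \eqref{eq:limitode}, conclusion by uniqueness (then Proposition~\ref{prop:chaotic}) --- the recurring subtlety being that Assumption~\ref{ass:growthlip} provides only \emph{linear} growth of the rates, which I handle by localizing at the exit times $\tau^N_R:=\inf\{t\ge0:\bulletized{\bZ^N(t)}>R\}$ (equivalently, by a bounded‑rate approximation) and by a first‑moment bound for the focal lineage uniform in $N$. For well‑posedness I would run a fixed‑point scheme on $C([0,T],\Mp(\N_0^d))$: given $w$ with $w(0)=\nu$, let $\Gamma w$ solve the \emph{linear}, time‑inhomogeneous version of \eqref{eq:limitode} obtained by freezing the measure argument of every rate to $w(t)$; this is the forward equation of a single time‑inhomogeneous MTBDP with rates $b^i(\cdot,w(t)),d^i(\cdot,w(t)),m^{i,k}(\cdot,w(t))$, conservative and non‑explosive by Assumption~\ref{ass:growth} (the argument behind Proposition~\ref{prop:fellerfinite} applies with $N=1$), so $\Gamma w(t)\in\Mp(\N_0^d)$. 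Assumption~\ref{ass:growth} gives a differential inequality for the first moment of $\Gamma w(t)$ with constant independent of $w$, hence a bound $M_T$ uniform in $w$; on the closed, $\Gamma$‑invariant set of flows with initial value $\nu$ and first moment $\le M_T$, Assumption~\ref{ass:lipschitz} yields a Gr\"onwall estimate making $\Gamma$ a contraction for small $T$, so Banach gives a unique local solution, extended to a unique global $v$ by concatenation. Uniqueness among $\Mp(\N_0^d)$‑valued solutions with locally bounded first moments follows from a Gr\"onwall estimate on $\sum_{\by}(1+\bulletize{\by})\lvert v_{\{\by\}}(t)-\tilde v_{\{\by\}}(t)\rvert$, exploiting that $\by\mapsto 1+\bulletize{\by}$ is a linear‑growth Lyapunov function for the frozen‑rate generator and that the rates depend Lipschitz‑continuously on the measure. (This, and the tightness below, use that the initial first moments $\sum_{\by}\bulletize{\by}\,\nu^N_{\{\by\}}$ are bounded uniformly in $N$ --- a mild condition, implicit in the statement or else handled by an additional truncation of the initial data.)

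\textbf{Step 2: moment bound and tightness.}
By Proposition~\ref{prop:exchangeability}, $\bZ^N(t)$ is exchangeable and $\Pi^N$ is Markov, and by Proposition~\ref{prop:chaotic}(2) tightness of $(\Pi^N)_N$ in $\bD(\R_+,\Mp(\N_0^d))$ is equivalent to tightness of the focal‑lineage law. Applying the martingale problem for $\bZ^N$ to a localized version of $\bz\mapsto\bulletize{\bz_1}$, and using that births add $1$ and mutations add $0$ to a lineage's total count, the compensator of lineage $1$'s total count stopped at $\tau^N_R$ grows at rate at most $\sum_i b^i(\bZ^N_1(t),\pi_{\bZ^N(t)})\le dL(\bulletize{\bZ^N_1(t)}+1)$; taking expectations, Gr\"onwall, Fatou (as $\tau^N_R\uparrow\infty$), and the same computation on the supermartingale $e^{-dLt}(\bulletize{\bZ^N_1(t)}+1)$ give $\sup_N\P(\sup_{t\le T}\bulletize{\bZ^N_1(t)}>R)\le C_T/(R+1)$. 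Since $\Mp(\N_0^d)$ is a Polish subset of $\ell^1(\N_0^d)$ whose compact sets are the uniformly‑tight families, and $\E[\Pi^N(t)(\{\bulletize{\by}>R\})]=\P(\bulletize{\bZ^N_1(t)}>R)$ by exchangeability, compact containment of $(\Pi^N)_N$ holds uniformly in $N$; combined with the Aldous--Rebolledo criterion applied to the scalar processes $t\mapsto\Pi^N(t)(\{\by\})$ --- whose compensators are uniformly bounded and whose quadratic variations are $O(1/N)$ --- this yields tightness of $(\Pi^N)_N$ (the indicators $\ind_{\{\by\}}$ being a convergence‑determining family).

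\textbf{Step 3: identification of the limit.}
The key point is that for a \emph{finitely supported} $g:\N_0^d\to\R$ and $h_g(\nu):=\sum_{\by}g(\by)\nu_{\{\by\}}$, the increment of $h_g$ along any jump of $\Pi^N$ is exactly $\tfrac1N$ times a difference of $g$‑values, so the generator of the jump chain $\Pi^N$ acts on $h_g$ with no $N$‑dependence, namely by
\[\mathcal{L}h_g(\nu)=\sum_{\by}\sum_i\nu_{\{\by\}}\Big(b^i(\by,\nu)[g(\by+\be_i)-g(\by)]+d^i(\by,\nu)[g(\by-\be_i)-g(\by)]+\sum_{k\neq i}m^{i,k}(\by,\nu)[g(\by+\be_k-\be_i)-g(\by)]\Big),\]
a finite sum over $\by$ --- hence bounded, and continuous on $\Mp(\N_0^d)$ because the rates are $\lVert\cdot\rVert_{TV}$‑Lipschitz and total‑variation and weak convergence coincide on $\Mp(\N_0^d)$ (Scheff\'e). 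Thus $M^N_g(t):=h_g(\Pi^N(t))-h_g(\Pi^N(0))-\int_0^t\mathcal{L}h_g(\Pi^N(s))\,\dd s$ is a bounded martingale with $\E[(M^N_g(t))^2]=\E[\langle M^N_g\rangle_t]=O(1/N)$, since each jump changes $h_g$ by $O(1/N)$ and the rate of $h_g$‑relevant jumps is $O(N)$ (the $\by$ involved lie in a fixed finite set). Passing to a subsequential weak limit $\Pi^\infty$ --- Skorokhod representation, bounded convergence for the integral term, $L^2$‑vanishing of $M^N_g$ --- shows that a.s., for every finitely supported $g$ and every $t$, $h_g(\Pi^\infty(t))=h_g(\Pi^\infty(0))+\int_0^t\mathcal{L}h_g(\Pi^\infty(s))\,\dd s$; compact containment forces $\Pi^\infty(t)\in\Mp(\N_0^d)$, so taking $g=\ind_{\{\by\}}$ over $\by\in\N_0^d$ and using $\Pi^N(0)=\nu^N\to\nu$ exhibits $\Pi^\infty$ a.s.\ as an $\Mp(\N_0^d)$‑valued solution of \eqref{eq:limitode} with $\Pi^\infty(0)=\nu$ (and with first moment $\le C_T$ by Step~2). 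By the uniqueness of Step~1, $\Pi^\infty=v$ a.s.; since this holds along every subsequence, $\Pi^N\Rightarrow v$, and Proposition~\ref{prop:chaotic}(1) upgrades this to $v$‑chaoticity of $P^N$.

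\textbf{Main obstacle.}
I expect the crux to be the moment bound: a first‑moment estimate for the focal lineage that is \emph{uniform in the system size $N$}. The natural Lyapunov function $\bz\mapsto\bulletize{\bz_1}$ is unbounded, hence outside the domain of $A^N$, forcing the localization‑by‑$\tau^N_R$ (or bounded‑rate‑approximation) detour together with a Gr\"onwall/Fatou argument in which the error terms are controlled by \emph{precisely} the at‑most‑linear growth in Assumption~\ref{ass:growth}; moreover the genuinely interacting structure --- lineage $1$'s rates see $\pi_{\bZ^N(t)}$, hence all $N$ coordinates --- must be neutralized via exchangeability so that bounding a single coordinate suffices. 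A secondary difficulty is well‑posedness of the infinite system \eqref{eq:limitode} with linearly‑growing coefficients, where the Lyapunov‑weighted Gr\"onwall estimate of Step~1 is the right tool. The remaining ingredients --- tightness given the moment bound, and the martingale‑problem identification of the limit via linear cylindrical functionals, where crucially the generator of $\Pi^N$ acts on these functionals with no $N$‑dependence --- are technical but follow well‑trodden lines.
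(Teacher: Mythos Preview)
Your approach is broadly sound and follows the classical ``tightness plus martingale-problem identification'' template, which is a genuinely different route from the paper's. The paper instead localizes by introducing a $\kappa$-truncated system $\bZ^{N,\kappa}$ (families freeze upon reaching total size $\kappa$), proves $\Pi^{N,\kappa}\Rightarrow v^\kappa$ for the bounded-rate truncated system via \citet[Cor.~4.8.16]{Ethier1986}, shows the truncation error $\E[\sup_{t\le T}\lVert\Pi^{N,\kappa}(t)-\Pi^N(t)\rVert_{\mathrm{TV}}]$ is small \emph{uniformly in $N$}, and closes with a Wasserstein-$1$ triangle inequality that forces every subsequential limit of $\Pi^N$ to coincide with $\lim_\kappa v^\kappa=v$. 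Crucially, all size control in the paper comes from a pathwise coupling $\bZ^N(t)\le\bR^N(t)$ to a system of independent Yule-type pure-birth processes, not from Lyapunov/Gr\"onwall moment estimates. Your Step~3 observation that the generator of $\Pi^N$ acts on the linear functionals $h_g$ with no $N$-dependence is correct and is a clean shortcut around the paper's truncation machinery; the paper never exploits this.

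However, the gap you flag parenthetically in Step~2 is real and not as innocuous as you suggest. The theorem assumes only $\nu^N\to\nu$ weakly, which yields tightness of $\{\nu^N\}$ but \emph{not} a uniform bound on the first moments $\sum_{\by}\bulletize{\by}\,\nu^N_{\{\by\}}$; hence your supermartingale argument giving $\sup_N\P(\sup_{t\le T}\bulletize{\bZ^N_1(t)}>R)\le C_T/(R+1)$ is unavailable as stated, and the same deficit undermines the weighted-$\ell^1$ Gr\"onwall in your Step~1 uniqueness argument when $\nu$ itself has infinite first moment. The paper's Yule-domination coupling is designed precisely to sidestep this: because $\bZ^N_1(t)\le\bR^N_1(t)$ pathwise and $\bR^N_1$ is a single non-explosive process whose law depends on $N$ only through its initial value $\bZ^N_1(0)\sim\nu^N$, compact containment and the uniform-in-$N$ truncation error follow from \emph{tightness} of $\{\nu^N\}$ alone (see the $\eta$-argument in the proof of Proposition~\ref{prop:epsilonstory}), with no moment hypothesis. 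Your proposed fix (``additional truncation of the initial data'') can be made to work, but since the focal lineage's rates see the full empirical measure $\pi_{\bZ^N(t)}$, carrying it out carefully essentially reproduces the paper's coupling-plus-truncation argument.
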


\begin{remark}
\label{rem:chaos_implication}
From Theorem~\ref{thm:conv}, the continuity of $v$, and the continuous mapping theorem, it follows that,
under the conditions of Theorem~\ref{thm:conv},
\[
\Pi^N(t)\xRightarrow{N\to\infty}v(t)
\]
for every $t \ge 0$ (cf. Theorem 23.9 of \cite{Kallenberg2021}).  Hence, under the assumptions of Theorem~\ref{thm:conv},
it follows from the theory of propagation of chaos, see Proposition~2.2 of \cite{Sznitman1991}, that for every $k \in \N$ and $t \ge 0$ the $(\N_0^\typ)^k$-valued random vector $(\bZ_1^N(t), \ldots, \bZ_k^N(t))$
converges in distribution and that the limiting distribution is $v(t)^{\otimes k}$.  We can do better than this, as the following second major result shows.  
\end{remark}

\begin{corollary}[Convergence of focal processes]
\label{cor:conv}
Under the conditions of Theorem~\ref{thm:conv}, for each $k \in \N$ there is convergence in distribution of the $D(\R_+,(\N_0^\typ)^k)$-valued sequence of random elements $\{(\bZ_1^N, \ldots, \bZ_k^N)\}_{N \in \N}$ to $(\bZ_1^\infty, \ldots, \bZ_k^\infty)$, where $\bZ_1^\infty, \ldots, \bZ_k^\infty$ are i.i.d. time-inhomogeneous MTBDPs with common initial distribution $\nu$ and the  birth, death, and mutation rates of $\bZ_j^\infty$, $1 \le j \le k$, at time~$t\ge 0$ are given by $b^i(\bZ_j(t),v(t))$, $d^i(\bZ_j(t),v(t))$, and $m^{i,\ell}(\bZ_j(t),v(t))$ for $i,\ell\in[\typ]$, $i\ne \ell$.  
\end{corollary}

The remainder of this paper consists of two sections. The next section contains the proofs of our theoretical results from Section 2. In Section 4 we provide a numerical example of an MTBDP where the rates depend on the number of particles of each type that are present in the system.

\section{Proofs of the main results} \label{sec:proofs}

We begin by proving the properties of the finite system and its empirical measure process. Next, we examine a system of independently evolving Yule-type processes that dominates the interacting MTBDP system, deriving results essential for establishing the properties of $\Pi^N$. In the third section, we prove Theorem~\ref{thm:conv} using the properties of a localized system. Finally, we establish the properties of the localized system, which we utilize in the proof of Theorem~\ref{thm:conv}.

\subsection{Properties of the finite system} \label{sec:propertiesfinite}

We initiate our analysis by proving the result concerning the generator of the finite system of MTBDPs.
\begin{proof}[Proof of Proposition~\ref{prop:fellerfinite}]
	The proof consists of checking the
	conditions of Theorem~3.1 in Chapter~8 of \cite{Ethier1986}.
	The kernel that plays the role of the kernel
	$x \mapsto \lambda(x) \mu(x, \dd y)$ in \cite{Ethier1986} is here
	\begin{equation}
		\begin{split}
			\bz \mapsto
			\sum_{j=1}^N \sum_{i=1}^{\typ}
			\biggl[
			b^i(\bz_{j},\pi_{\bz})\delta_{\bz+\be_{j,i}} + d^i(\bz_{j},\pi_{\bz})\delta_{\bz-\be_{j,i}} +\sum_{k \in [\typ], \, k\ne i}  \hspace{-4mm} m^{i,k} (\bz_j,\pi_{\bz}) 
			\delta_{\bz+\be_{j,k}-\be_{j,i}} \biggr] \\ 
		\end{split}
		\label{def:jump_kernel}
	\end{equation}
	We will take the functions $\gamma$ and $\eta$ that appear in the statement
	of that result to both be $\bz \mapsto \chi(\bz) \coloneqq (\bz_{\bullet \bullet} \vee 1)$.
	
	First note that $\bz \mapsto 1/\chi(\bz) \in \hat{C}((\N_0^{\typ})^N)$, as required in \cite{Ethier1986}.
	Secondly,
	\begin{equation}
		\sup_{\bz \in (\N_0^{\typ})^N} \sum_{j=1}^N\sum_{i=1}^{\typ}
		\biggl[ b^i(\bz_{j},\pi_{\bz})
		+d^i(\bz_{j},\pi_{\bz})
		+\sum_{k \in [\typ], \, k \ne i}  m^{i,k}(\bz_{j},\pi_{\bz}) 
		\biggr] \bigg / \chi(\bz) < \infty
		\label{bd:lambda_pi_ratio}
	\end{equation}
	by \ref{ass:growth},
	and so hypothesis (3.2) of \cite{Ethier1986} is satisfied.
	
	If $\bz'$ is a point in the support of the measure on the right-hand size of \eqref{def:jump_kernel}, 
	then $\lvert\bulletized{\bz} -\bulletized{\bz'}\rvert \leq 1$ 
	and hence hypothesis (3.3) of
	\cite{Ethier1986} is satisfied.
	
	Combining the bound \eqref{bd:lambda_pi_ratio} with the observation
	of the previous paragraph shows that hypotheses (3.4) and (3.5) of \cite{Ethier1986} hold, and this completes the proof.
\end{proof}

Next, we establish the exchangeability of the finite system and demonstrate the Markovianity of the empirical measure process.

\begin{proof}[Proof of Proposition~\ref{prop:exchangeability}]
	We first want to apply \citet[Corollary~3.5]{Kurtz1998}. 
	Note that for any $h\in \bar{C}(\Mpn(\N_0^{\typ}))$ and $\pi_{\bz}\in \Mpn(\N_0^{\typ})$, we have $\int h(\pi_{\by})\alpha^N(\pi_{\bz},\dd \by)=h(\pi_{\bz}).$ 
	Define 
	\begin{equation}
		C^N=\left \{ \left ( \textstyle\sum_{\by\in (\N_0^{\typ})^N} f(\by) \alpha^N(\cdot, \dd \by ),\sum_{\by\in (\N_0^{\typ})^N} A^Nf(\by) \alpha^N(\cdot, \dd \by ) \right ):f\in \Delta_N \right \}. \label{eq:generatorempiricalN}
	\end{equation}
	We have to verify (the somewhat technical condition) that each solution of the extended forward equation of $A^N$ 
	corresponds to a solution of the martingale problem.
	Assume for now this is true.
	We show in Lemma~\ref{lem:generatoridentity} below that for $f(\bz)=\prod_{j=1}^N g_j(\bz_{j})$ with $g_j\in \hat{C}(\N_0^{\typ})$, $\alpha^N (A^Nf)(\pi_{\bz} )
	= B^N(\alpha^N f)(\pi_{\bz}).$ 
	In particular, $\Pi^N$ solves the $C^N$ martingale problem.
	Thus, by Corollary~3.5 of \citet{Kurtz1998} (with $\gamma(\bz)=\pi_{\bz}$ ), 
	$\Pi^N$ is a Markov process.
	Moreover, by Theorem~4.1 of \citet{Kurtz1998}, $\bZ^N(t)$ is then exchangeable.
	
	It remains to verify that each solution of an extended forward equation of $A^N$ corresponds to a solution of the martingale problem. By Lemma~3.1 of~\citet{Kurtz1998}, 
	it is enough to verify that $A^N$ satisfies the conditions of Theorem~2.6 of \citet{Kurtz1998}, that is, 
	that $A^N$ is a pre-generator and 
	Hypothesis~2.4 of~\citet{Kurtz1998} is satisfied. 
	Because $(\N_0^{\typ})^N$ is locally compact, 
	the latter is satisfied by Remark~2.5 of~\citet{Kurtz1998}. 
	Another consequence of local compactness is that 
	for $A^N$ to be a pre-generator,
	it is enough to verify that $A^N$ satisfies the positive maximum principle \cite[p.4]{Kurtz1998}, 
	which is easily seen to be the case.
\end{proof}

The following lemma is a technical result used in the proof of Proposition~\ref{prop:exchangeability}.
\begin{lemma}\label{lem:generatoridentity}
	For $j\in [N]$, let $g_j\in \hat{C}(\N_0^{\typ})$ and set $f(\bz)= \prod_{j=1}^Ng_j(\bz^{N}_j)$. 
    Then, for $\bz\in (\N_0^{\typ})^N$ and $\pi_{\bz}\in \Mpn(\N_0^{\typ})$, we have
	\begin{equation*}
		\alpha_N (A^Nf)(\pi_{\bz} )
		= B^N(\alpha_N f)(\pi_{\bz}).
	\end{equation*}
\end{lemma}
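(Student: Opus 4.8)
The plan is to exploit permutation symmetry directly. Both the generator $A^N$ and the averaging kernel $\alpha^N$ are compatible with relabeling the $N$ coordinates, and once $f$ has been averaged over all relabelings the lineages may be regrouped according to their common state — which is precisely the bookkeeping that $B^N$ encodes. Conceptually the lemma just says that the symmetric Markov generator $A^N$ pushes forward, under the map $\bz\mapsto\pi_{\bz}$, to the generator $B^N$ of the empirical-measure process; the product structure of $f$ plays no role.

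First I would record the two elementary facts that drive the computation. For a permutation $\sigma$ of $[N]$, write $\sigma\bz$ for the configuration with $(\sigma\bz)_j=\bz_{\sigma^{-1}(j)}$; then $\pi_{\sigma\bz}=\pi_{\bz}$, and $\alpha^N(\pi_{\bz},\cdot)$ is the law of $\sigma\bz$ when $\sigma$ is uniform, so that $\alpha^N g(\pi_{\bz})=\frac1{N!}\sum_{\sigma}g(\sigma\bz)$ for every bounded $g$ on $(\N_0^d)^N$. Second, for fixed $k\in[N]$ and $i\in[d]$ one has the equivariance identity $\sigma\bz+\be_{\sigma(k),i}=\sigma(\bz+\be_{k,i})$, and likewise $\sigma\bz-\be_{\sigma(k),i}=\sigma(\bz-\be_{k,i})$ and $\sigma\bz+\be_{\sigma(k),k'}-\be_{\sigma(k),i}=\sigma(\bz+\be_{k,k'}-\be_{k,i})$; this is what lets one pull $\sigma$ out of $f$. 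It also helps to note that $\#\{k\in[N]:\bz_k=\by\}=N(\pi_{\bz})_{\{\by\}}$ by definition of $\pi_{\bz}$.

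I would then handle the birth, death and mutation parts of $A^N$ separately, since they are structurally identical; take the birth part $A^N_\birth f(\bz)=\sum_{j,i}b^i(\bz_j,\pi_{\bz})[f(\bz+\be_{j,i})-f(\bz)]$. Apply $\alpha^N$ at $\pi_{\bz}$: $\alpha^N(A^N_\birth f)(\pi_{\bz})=\frac1{N!}\sum_{\sigma}A^N_\birth f(\sigma\bz)$. Using $\pi_{\sigma\bz}=\pi_{\bz}$, reindexing the lineage sum by $j=\sigma(k)$ so that $(\sigma\bz)_j=\bz_k$, and applying the equivariance identity, $A^N_\birth f(\sigma\bz)=\sum_{k,i}b^i(\bz_k,\pi_{\bz})[f(\sigma(\bz+\be_{k,i}))-f(\sigma\bz)]$. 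Averaging over $\sigma$ and using $\alpha^N g(\pi_\bw)=\frac1{N!}\sum_\sigma g(\sigma\bw)$ termwise turns this into $\sum_{k,i}b^i(\bz_k,\pi_{\bz})[\alpha^N f(\pi_{\bz+\be_{k,i}})-\alpha^N f(\pi_{\bz})]$. Since $\pi_{\bz+\be_{k,i}}=\pi_{\bz}+\frac1N(\delta_{\bz_k+\be_i}-\delta_{\bz_k})$, grouping the $k$-sum by the value $\by:=\bz_k$ and using $\#\{k:\bz_k=\by\}=N(\pi_{\bz})_{\{\by\}}$ gives exactly $B^N_\birth(\alpha^N f)(\pi_{\bz})$. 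The death and mutation parts go through verbatim, with $\be_{k,i}$ replaced by $-\be_{k,i}$, resp.\ $\be_{k,k'}-\be_{k,i}$, and $b^i$ by $d^i$, resp.\ $m^{i,k'}$; the standing convention that $d^i(\by,\cdot)=m^{i,k'}(\by,\cdot)=0$ whenever $y_i=0$ guarantees that no configuration with a negative coordinate is ever created, so the grouping step is unaffected and the boundary terms simply drop. Summing the three contributions yields $\alpha^N(A^N f)(\pi_{\bz})=B^N(\alpha^N f)(\pi_{\bz})$.

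The computation is mechanical; the only point that genuinely needs care is the relabeling bookkeeping — getting the equivariance identity and the reindexing $j=\sigma(k)$ straight — together with reading $\alpha^N(\nu,\cdot)$ as a bona fide probability measure, namely the uniform law on the permutation orbit of any representative $\bz$ with $\pi_{\bz}=\nu$ (counting permutations with their multiplicities when some coordinates of $\bz$ coincide), rather than the uniform law on the set of distinct configurations with empirical measure $\nu$. It is the former interpretation under which $\alpha^N$ is a Markov kernel and under which the identity holds on configurations with repeated coordinates.
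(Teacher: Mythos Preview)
Your argument is correct and proceeds along the same lines as the paper's: treat the birth, death and mutation pieces of $A^N$ separately and regroup the lineage sum by the common state $\by$ to recover $B^N$ acting on $\alpha^N f$. The paper sums over configurations $\bx$ with $\pi_{\bx}=\pi_{\bz}$ while you average over permutations of a fixed representative, and your closing caveat is well taken: for the identity to hold on configurations with repeated coordinates one must read $\alpha^N(\nu,\cdot)$ as the uniform law on the permutation orbit counted with multiplicity (so that it is a genuine probability kernel), a normalization the paper's displayed formula and its one-line passage to $(\alpha^N f)(\pi_{\bz}+(\delta_{\by+\be_i}-\delta_{\by})/N)$ do not make explicit. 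You are also right that the product form of $f$ is immaterial to the computation.
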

\begin{proof}
	We will only show $\alpha_N (A^N_\birth f)(\pi_{\bz})=B^N_\birth(\alpha_Nf)(\pi_{\bz})$. That $\alpha_NA^N_\death f(\pi_{\bz})=B^N_\death(\alpha_Nf)(\pi_{\bz})$ and $\alpha_NA^N_\mut f(\pi_{\bz})=B^N_\mut(\alpha_Nf)(\pi_{\bz})$ can be proven in a similar vein. The result then follows from the linearity of $A^N$ and $B^N$.
	We have 
	\begin{align*}
		&\alpha_N A^N_\birth f(\pi_{\bz} )\\
        &\ =  \frac{1}{N!} \prod_{\substack{\by' \in \N_0^{\typ}:\\  \by \in \supp(\pi_{\bz})}}\hspace{-.4cm}(N\pi_{\bz}(\{\by'\}))! \sum_{\substack{\bx\in (\N_0^{\typ})^N:\\ \pi_{\bx}=\pi_{\bz}}} \sum_{j=1}^N \sum_{i=1}^{\typ}  b^i(\bx^{j},\pi_{\bz}) [f(\bx+\be_{j,i})-f(\bx)] \\
		&\ = \frac{1}{N!} \prod_{\substack{\by' \in \N_0^{\typ}:\\  \by \in \supp(\pi_{\bz})}}\hspace{-.4cm}(N\pi_{\bz}(\{\by'\}))! \sum_{\by \in \N_0^{\typ}}\sum_{i=1}^{\typ}   \sum_{j=1}^N \sum_{\substack{\bx\in (\N_0^{\typ})^N:\\ \pi_{\bx}=\pi_{\bz}}} \ind_{\by}(\bx^{j})  b^i(\bx^{j},\pi_{\bz}) [f(\bx+\be_{j,i})-f(\bx)]\\
		&\ = \sum_{\by\in \N_0^{\typ}} \sum_{i=1}^{\typ} N\pi_{\bz}(\by) b^i(\by,\pi_{\bz}) \Big[(\alpha_Nf)\big(\pi_{\bz}+(\delta_{\by+\be_i}-\delta_\by)/N\big)-(\alpha_Nf)(\pi_{\bz}) \Big]\\
		&\ =B^N_\birth(\alpha_Nf)(\pi_{\bz}).
	\end{align*}
\end{proof}

\bigskip\noindent
{\bf For the remainder of this section we will assume that the conditions of Theorem~\ref{thm:conv} hold; that is, \ref{ass:growth} and \ref{ass:lipschitz} hold,
and the sequence
$\nu^N\in \Mpn(\N_0^{\typ})$, $N \in \N$, satisfies $\nu^N\xrightarrow{N\to\infty} \nu\in \Mp(\N_0^{\typ})$.}

\bigskip

\subsection{A dominating system of Yule-type processes}\label{sec:dominatingyule}

It will be useful to compare $\bZ^N$ to a system of asymptotically independent multi-type pure-birth-like processes that will have simultaneous births of different types.
Even though these Markov processes are $\N_0^{\typ}$-valued, 
they inherit several useful properties from the classic Yule process.
Let $\bR^N=(\bR_1^N, \ldots, \bR_N^N)$ be the $(\N_0^{\typ})^N$-valued Markov process 
transitioning from $(\N_0^{\typ})^N \ni 
(\br_1, \ldots, \br_N) \to (\br_1, \ldots, \br_N) +
(\mathbf{0}, \ldots, \mathbf{0}, \mathbf{1}, \mathbf{0}, \ldots, \mathbf{0})$
at rate $6L \typ^2 \bulletize{(\br_j)}$,
where $\mathbf{0} \in \N_0^{\typ}$ is the vector of all $0$s 
and $\mathbf{1} \in \N_0^{\typ}$ is the vector of all $1$s.   Define $\rho:\N_0^\typ \to \N_0^\typ$  by
$\rho(\br):= \bulletize{\br} \mathbf{1}$ (that is, if we think of $\br$ as a collection of particles of different types, then $\rho(\br)$ replaces each particle by $\typ$ particles where there is one particle of every one of the $\typ$ types.  Suppose that
$\bR^N(0)$ has distribution $\alpha^N(\nu^N, \cdot) \circ (\rho, \ldots, \rho)^{-1}$. 
It follows that each $\bulletize{(\bR_j^N)}$ is an autonomous Markov process on $\typ \N_0$ that transitions from
$\typ r$ to $\typ r + \typ$ at rate $6L\typ^3 r$.  Consequently, $\typ^{-1}  \bulletize{(\bR_j^N)}$ is
a Yule process that transitions from state $x$ at rate $6L\typ^3 x$ (that is, the split rate per particle is $6L\typ^3$).

For $t \ge 0$ define $\Pi^N_{\bR}(t):=\frac{1}{N}\sum_{j=1}^N\delta_{\bR_j^N(t)}$ and set $\Pi^N_{\bR}:=(\Pi^N_{\bR}(t))_{t\geq 0}$.

\begin{lemma}
\label{lem:convpir}
\begin{itemize}
    \item[i)]
    For each $k \in \N$, there exist $\bR^\infty_j$, $j \in [k]$, such that the sequence $\{(\bR^N_1, \ldots, \bR_k^N)\}_{N \in \N}$ 
    converges in distribution to $(\bR^\infty_1, \ldots, \bR^\infty_k)$.  The $\bR^\infty_j$, $j \in [k]$ are i.i.d. Markov processes. Each one has initial distribution $\nu \circ \rho^{-1}$ and the same transition dynamics as the $\bR_\ell^N$, $\ell \in [N]$, $N \in \N$, have in common.
    \item[ii)]
    There is a unique solution $r\coloneqq(r(t))_{t\geq 0}$ to the initial value problem: 
$r(0)=\nu \circ \rho^{-1}$ and for all $\by\in \N_0^{\typ}$,
\begin{equation}
	\label{eq:limitodeyule}
	\begin{split}
		r_{\{\by\}}'(t)=&-6L \typ^2 \bulletize{\by} r_{\{\by\}}(t) +6L \typ^2 \bulletize{(\by-\mathbf{1})} r_{\{\by-\mathbf{1}\}}(t),
	\end{split}
\end{equation}
where $r_{\{\by\}}(t)=0$ for $\by\notin \N_0^{\typ}$.
\item[iii)]
We may build $(\bR^N)_{N \in \N}$ on a suitable probability space so that 
\[
\Pi^N_{\bR} \xrightarrow{N \to \infty} r, \quad \text{a.s.}
\]
\end{itemize}

\end{lemma}

\begin{proof}
i) Recall that the distribution of $\bR^N(0)$
is $\alpha^N(\nu_0^N, \cdot) \circ (\rho, \ldots, \rho)^{-1}$.  It suffices to show that the projection
of this exchangeable probability measure onto the first $k$ coordinates of $(\N_0^{\typ})^N$ converges weakly to
the product probability measure
$(\nu \circ \rho^{-1})^{\otimes k}$ as $N \to \infty$.
Moreover, from Proposition~2.2 of 
\cite{Sznitman1991} it suffices to check that the sequence of probability measures on $\Mp(\N_0^{\typ})$ given by
$(\alpha^N(\nu^N, \cdot) \circ (\rho, \ldots, \rho)^{-1})\circ \pi^{-1}$, $N \in \N$, converges weakly to the unit point mass at the probability measure $\nu \circ \rho^{-1}$ (recall that $\bz \mapsto \pi_\bz$ is the map that takes $\bz \in (\N_0^\typ)^N$ to $\frac{1}{N}\sum_{j = 1}^N \delta_{\bz_j} \in \Mpn(\N_0^\typ)$). However, it is clear by construction that $(\alpha^N(\nu^N, \cdot) \circ (\rho, \ldots, \rho)^{-1})\circ \pi^{-1}$ is simply the unit point mass at the probability measure $\nu^N \circ \rho^{-1}$.

\bigskip
\noindent
ii) Note that \eqref{eq:limitodeyule} is just the Kolmogorov forward equations for a Markov process with transition dynamics the common transition dynamics of
$\bR_j^N$, $j \in [N]$, $N \in \N$, and initial distribution $\nu \circ \rho^{-1}$; that is, for a Markov process with the common distribution of $\bR_j^\infty$, $N \in \N$.  As we have remarked, such a Markov process is essentially a Yule process, and hence the Kolmogorov forward equations have a unique solution.

\bigskip
\noindent
iii) From (i) and Proposition~2.2 of \cite{Sznitman1991} we have that the empirical measures on the path space $D(R_+, \Mp(\N_0^{\typ}))$ given by
$\Sigma^N := \frac{1}{N} \sum_{j=1}^N \delta_{\bR_j^N}$ converge in distribution to the point mass at the common distribution of the $\bR^\infty_j$, $j \in \N$.
By Skorohod's coupling, see Theorem~5.31 in \cite{Kallenberg2021}, it is possible to build random variables with the distributions of the $\Sigma^N$, $N \in \N$, on a suitable probability space so that
$\Sigma^N$ converges almost surely to the point mass at the common distribution of the $\bR^\infty_j$, $j \in \N$.  We may, of course, also assume that $\bR^\infty_j$, $j \in \N$, are built on this probability space.

Fix $T > 0$. Let $\mathfrak{D}$ be a countable dense set in $[0,T]$ containing $\{0,T\}$.
By the continuous mapping theorem, with probability one we have that for all $\mathbf{m} \in \N_0^{\typ}$
\begin{equation}
\label{eq:convergence_on_D}
\Pi^N_{\bR}(t) (\{\by: y_i \geq m_i, \, i \in [\typ] \}) 
\xrightarrow{N\to\infty}
 r(t)(\{\by: y_i \geq m_i, \, i \in [\typ] \})
\end{equation}
for all $t \in \mathfrak{D}$.  By well-known results in real analysis, the monotonicity of the functions involved in the convergence in \eqref{eq:convergence_on_D}, plus the continuity of the right-hand side give firstly that the convergence holds for all $t \in [0,T]$ and secondly that the convergence is uniform.
Consequently, almost surely $\Pi^N_{\bR}(t)(\{\by\})$ converges uniformly to $r_{\{\by\}}(t)$ on $[0,T]$ for every $\by \in \N_0^{\typ}$.  

Given any $\epsilon > 0$ we can choose $K$ such that 
$\Pi^N_{\bR}(T) (\{\by: \bulletize{\by} > K \}) \le \epsilon$ for all $N$ and 
$r(T) (\{\by: \bulletize{\by} > K \}) \le \epsilon$.  Therefore, using the monotonicity of 
$\Pi^N_{\bR}(t) (\{\by: \bulletize{\by} > K \})$ and $r(t) (\{\by: \bulletize{\by} > K \})$
we have
\[
\limsup_{N \to \infty} \sup_{t \in [0,T]} \|\Pi^N_{\bR}(t) - r(t)\|_{\mathrm{TV}} \le 2 \epsilon.
\]
Since $T$ and $\epsilon$ are arbitrary, this completes the proof.
\end{proof}

For $\bz,\bz'\in(\N_0^{\typ})^N$, we write $\bz\leq \bz'$ if $z_{j,i}\leq z_{j,i}'$ for all $j,i$.

\begin{lemma}[Dominating pure-birth process coupling] \label{lem:dominationbirth}
	For each $N\in \N$
	we can couple $\bZ^N$ and $\bR^N$ together so that almost surely
	$\bZ^N(0)\leq \bR^N(0)$ and
    almost surely 
	for all $t \ge 0$, $j \in [N]$, and $i \in [\typ]$,
	$\lvert Z_{j,i}^N(t) -  Z_{j,i}^N(t-)\rvert \leq R_{j,i}^N(t) -  R_{j,i}^N(t-)$.  In particular, almost surely for all $t \ge 0$, $\bZ^N(t) \le \bR^N(t)$ and almost surely for all $0 \le s < t$, $j \in [N]$, and $i \in [\typ]$, $\lvert Z_{j,i}^N(t) -  Z_{j,i}^N(s)\rvert \leq R_{j,i}^N(t) -  R_{j,i}^N(s)$.  
\end{lemma}

\begin{proof}
    First note that, because $(\bR_1^N(0), \ldots, \bR_N^N(0))$ has the same distribution as $(\rho(\bZ_1^N(0)), \ldots, \rho(\bZ_N^N(0)))$, it is certainly possible to couple $\bZ^N(0)$ and $\bR^N(0)$ together in the prescribed manner.      
    
    Next observe that the rate at which a given $\bZ^N_j$ transitions to another state if $\bZ_j^N$ is in state $\bz_j \ne 0$ can be upper bounded using~\ref{ass:growth}:
        \begin{equation*}
            \begin{split}
                &\sum_{i=1}^\typ \bigg( b^i(\bz_j,\pi_{\bz})+d^i(\bz_j,\pi_{\bz})+\sum_{k\in [\typ], k\neq i} m^{i,k}(\bz_j,\pi_{\bz}) \bigg)\\
                 &\leq 3 L \typ^2 \big(1+\bulletize{(\bz_j)}\big)\leq 6L\typ^2 \bulletize{(\bz_j)}.
            \end{split}
        \end{equation*}
    The same inequality holds trivially when $\bz_j = 0$ by our assumption that in this case $b^i(\bz_{j},\pi_{\bz}) = d^i(\bz_{j},\pi_{\bz}) = m^{i,k}(\bz_{j},\pi_{\bz}) = 0$ for $i,k \in [\typ], \, i \ne k$.

	Thus, we can couple $\bZ^N$ to $\bR^N$ by restricting the possible jump times of $\bZ^N$ to the jump times of $\bR^N$
    and for  $\tau$  a jump time of $\bR^N$ such that the jump occurs in $\bR_j^N$ for some $j\in[\typ]$, 
    setting
	\[\bZ^N(\tau)=\begin{cases}
		\bZ^N(\tau-)+\be_{j,i}, &\text{w.p. } \frac{b^i(\bZ_j^N(\tau-),\Pi^N(\tau-))}{6L \typ^2 \bulletize{(\bR_j^N(\tau-))}},\qquad i \in[\typ]\\
		\bZ^N(\tau-)-\be_{j,i}, &\text{w.p. } \frac{d^i(\bZ_j^N(\tau-),\Pi^N(\tau-))}{6L \typ^2 \bulletize{(\bR_j^N(\tau-))}},\qquad i \in[\typ]\\
		\bZ^N(\tau-)+\be_{j,k}-\be_{j,i}, &\text{w.p. } 
        \frac{m^{i,k}(\bZ_j^N(\tau-),\Pi^N(\tau-))}{6L \typ^2 \bulletize{(\bR_j^N(\tau-))}},\quad i,k\in[\typ],\ i\ne k,\\
		\bZ^N(\tau-), &\text{otherwise}.
	\end{cases}\]
	It is straightforward to check that then $\bZ^N$ has the correct distribution
	and the other properties we want.
\end{proof}

{\bf From now on we assume $\bZ^N$ is constructed on the basis of the coupling in Lemma~\ref{lem:dominationbirth}.}

\bigskip

For two probability measures $\nu,\nu'\in \Mp(\N_0^{\typ})$, we say that $\nu'$ {\em stochastically dominates} $\nu$ if for every $\mathbf{m}\in \N_0^{\typ}$, $\nu(\{\by: y_i \geq m_i, \, i \in [\typ] \})\leq \nu'(\{\by: y_i \geq m_i, \, i \in [\typ] \})$; we then write $\nu \preceq \nu'$.

\begin{remark}The upper bound of $\bZ^N$ in terms of $\bR^N$ can be translated to a bound for their respective empirical measure processes.
To this end,
define $\Pi^N_{\bR}(t):=\frac{1}{N}\sum_{j=1}^N\delta_{\bR_j(t)}$ and set $\Pi^N_{\bR}:=(\Pi^N_{\bR}(t))_{t\geq 0}$. 
Because of Lemma~\ref{lem:dominationbirth}, 
we have $\Pi^N(t) \preceq \Pi^N_{\bR}(t)$ for every $t\geq 0$.
Moreover,
 $\Pi^N_{\bR}$ is non-decreasing, i.e.
for all $0 \le s < t$,
$\Pi^N_{\bR}(s) \preceq \Pi^N_{\bR}(t)$.
\end{remark}

\subsection{Proving convergence via localization} \label{sec:localizationproofs}

We employ a localization argument to establish Theorem~\ref{thm:conv}. 
The core concept involves freezing families that reach a certain size $\kappa\in \N$. 
By utilizing classic methods, we can prove the convergence of the empirical distribution for a system undergoing such freezing.

Let $\bZ^{N,\kappa}:=(\bZ^{N,\kappa}_1,\ldots,\bZ^{N,\kappa}_N)$ be the system of interacting MTBDPs
that is coupled to $\bZ^N$ by freezing lineages once they reach a state $\by$ where $\bulletize{\by}=\kappa$.
Notably, the construction of $\bZ^{N,\kappa}$ can therefore also be based on the system $\bR^N$ in the manner of Lemma~\ref{lem:dominationbirth}. 
Importantly,  $\bZ^{N,\kappa}(t)\leq \bR^N(t)$ holds for all $t\geq 0$.

Let $b^{i,\kappa}$, $d^{i,\kappa}$, and $m^{i,k,\kappa}$ be the
birth, death, and mutation rates of $\bZ^{N,\kappa}$.  For example,
$b^{i,\kappa}(\bz,\nu) = b^i(\bz,\nu) \ind(\bulletize{\bz} < \kappa)$
We may think of $\bZ^{N,\kappa}$ as a Markov process on the finite state space $(\kapemp)^N$ where $\kapemp := \{\by \in \N_0^\typ: \bulletize{\by} \le \kappa\}$.

The generator $A^{N,\kappa}$ of $\bZ^{N,\kappa}$ 
is then $A^{N,\kappa}f(\bz):=\sum_{j=1}^N (A_{\birth}^{N,j,\kappa}+A_{\death}^{N,j,\kappa}+A_{\mut}^{N,j,\kappa})f(\bz)$ for $f\in\hat{C}((\N_0^{\typ})^N)$ with
\begin{align*}
	A_\birth^{N,j,\kappa}f(\bz)&:=\sum_{i=1}^{\typ} b^{i,\kappa}(\bz_{j},\pi_{\bz})[f(\bz+\be_{j,i})-f(\bz) ]\\
	A_\death^{N,j,\kappa}f(\bz)&:=\sum_{i=1}^{\typ} d^{i,\kappa}(\bz_{j},\pi_{\bz})[f(\bz-\be_{j,i})-f(\bz) ]\\
	A_\mut^{N,j,\kappa}f(\bz)&:=\sum_{i,k \in [\typ], \, i \ne k}  m^{i,k,\kappa}(\bz_{j},\pi_{\bz})[f(\bz+\be_{j,k}-\be_{j,i})-f(\bz) ].
\end{align*}

Due to the state space of $\bZ^{N,\kappa}$ being (essentially) finite, rendering it compact, we can now state the following proposition (see \cite[Ch. 8.3.1]{Ethier1986}).

\begin{proposition}[Feller property for the system of frozen processes] \label{prop:fellerfinitetruncated}
The closure of $\{(f, A^{N,\kappa} f): f \in C((\N_0^{\typ})^N)\}$ is single-valued and
generates a Feller semigroup on $C((\N_0^{\typ})^N)$.
\end{proposition}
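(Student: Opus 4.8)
The plan is to exploit the fact that truncating families at level $\kappa$ makes every transition rate uniformly bounded, so that $A^{N,\kappa}$ is a bounded operator and the conclusion reduces to the standard construction of pure-jump Markov processes with bounded rates (Chapter~8, Section~3.1 of \citet{Ethier1986}). First I would record the a priori bound: since $b^{i,\kappa}(\by,\nu)=\ind_{\kapemp}(\by)\,b^{i}(\by,\nu)$, and likewise for $d^{i,\kappa}$ and $m^{i,k,\kappa}$, Assumption~\ref{ass:growth} gives $b^{i,\kappa}(\by,\nu),d^{i,\kappa}(\by,\nu),m^{i,k,\kappa}(\by,\nu)\le L(\kappa+1)$ for all $\by\in\N_0^d$ and $\nu\in\Mp(\N_0^d)$ — the indicator suppresses exactly the states where $\bulletize{\by}+1$ could be large. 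Hence the total event rate of $A^{N,\kappa}$ is bounded, uniformly over $\bz\in(\N_0^d)^N$, by $M_\kappa\coloneqq Nd(d+1)L(\kappa+1)$; writing $A^{N,\kappa}f(\bz)=\int\big(f(\bz')-f(\bz)\big)\,\mu_\kappa(\bz,\dd\bz')$ with $\mu_\kappa(\bz,\cdot)$ of total mass at most $M_\kappa$ yields $\lVert A^{N,\kappa}f\rVert_\infty\le 2M_\kappa\lVert f\rVert_\infty$, so $A^{N,\kappa}$ extends to a bounded linear operator on the relevant Banach space of functions on the (effective, finite, hence compact) state space.

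Next I would verify the two structural properties that promote ``bounded jump operator'' to ``Feller generator'': (i) conservativeness, $A^{N,\kappa}\ind=0$, since each bracket $[f(\bz\pm\be_{j,i})-f(\bz)]$ and $[f(\bz+\be_{j,k}-\be_{j,i})-f(\bz)]$ vanishes for $f\equiv 1$; and (ii) the positive maximum principle, since if $f$ attains its maximum at $\bz_0$ with $f(\bz_0)\ge 0$ then every such bracket is $\le 0$ at $\bz_0$ while the rates are non-negative, so $A^{N,\kappa}f(\bz_0)\le 0$. A bounded, conservative operator satisfying the positive maximum principle generates a strongly continuous positive contraction (indeed Markov) semigroup $T^{N,\kappa}_t=\exp(tA^{N,\kappa})$ by the construction in Chapter~8.3.1 of \citet{Ethier1986}; this is the Feller semigroup of $\bZ^{N,\kappa}$. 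Single-valuedness of the closure is then immediate, because a bounded operator defined on all of the Banach space is already closed — so, in contrast to Proposition~\ref{prop:fellerfinite}, no genuine closure argument is needed here.

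I do not expect a deep obstacle; the whole point of the truncation is to make this step routine. The only thing requiring a little care is the bookkeeping around the function space: making precise that, although $(\N_0^d)^N$ itself is not compact, the truncated dynamics started from any compact set of initial states is confined to a finite set, so one is genuinely in the compact pure-jump setting, and that the semigroup preserves the ambient space of functions vanishing at infinity (which follows because, with total rate at most $M_\kappa$, the number of jumps by time $t$ is stochastically dominated by a $\mathrm{Poisson}(M_\kappa t)$ variable uniformly in the starting state). Once this is pinned down, the verification above completes the proof.
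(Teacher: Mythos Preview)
Your proposal is correct and follows the same approach as the paper: the paper does not give a detailed proof but simply observes that ``the state space being (essentially) finite, rendering it compact'' and cites \cite[Ch.~8.3.1]{Ethier1986}, which is precisely the bounded-rate pure-jump construction you spell out. Your elaboration---bounding the total rate by $Nd(d+1)L(\kappa+1)$, checking conservativeness and the positive maximum principle, and noting that a bounded operator is already closed---is exactly what is behind that citation, and your closing remark about the ``essentially compact'' bookkeeping matches the paper's parenthetical caveat.
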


Also, in the system with frozen dynamics, the empirical distribution process is Markov.
To be precise,
define for $t\geq 0$,
$\Pi^{N,\kappa}(t):=\frac{1}{N}\sum_{j=1}^N\delta_{\bZ^{N,\kappa}_j(t)}\in \Mpn(\N_0^{\typ})$ 
and set $\Pi^{N,\kappa}:=(\Pi^{N,\kappa}(t))_{t\geq 0}$. 
Its infinitesimal generator
$B^{N,\kappa}$ is defined in the same way as $B^N$, 
but with the $\kappa$-frozen transition rates
and modified domain 
(because the domain of $A^{N,\kappa}$ is different). The following holds via Proposition \ref{prop:exchangeability},  since the rate functions of the frozen process satisfy~\eqref{ass:growthlip}.
\begin{remark}[Exchangeability]\label{rem:exchangeabilitytruncated}
	Let $\nu^N \in \Mpn(\N_0^{\typ})$ and assume $\bZ^{N, \kappa} (0)$ has distribution $\alpha^N(\nu^N,\cdot)$.  For all $t\geq 0$, $\bZ^{N,\kappa}(t)=(\bZ^{N,\kappa}_1(t),\ldots,\bZ^{N,\kappa}_N(t))$ is exchangeable and 
	$\Pi^{N,\kappa}$ is a Markov process with generator $B^{N,\kappa}$.
\end{remark}

The proof of Theorem~\ref{thm:conv} revolves around three key propositions, 
all of which will be proved in \S~\ref{sec:convergencetruncated}.

\begin{proposition}[Approximation is uniform in system size]\label{prop:epsilonstory}
	For all $T>0$ and for all $\kappa > 0$, there is $\varepsilon(\kappa,T)$ such that 
	\[\E \left [ \sup_{t\in[0,T]}\lVert \Pi^{N,\kappa}(t)-\Pi^{N}(t)\rVert_{\mathrm{TV}} \right ]
	\leq 
	\varepsilon(\kappa,T)\]
	and $\varepsilon(\kappa,T)\xrightarrow{\kappa\to\infty} 0$. 
\end{proposition}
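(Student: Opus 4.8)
The goal is to control the difference between the full system $\bZ^N$ and its $\kappa$-truncated version $\bZ^{N,\kappa}$, uniformly in $N$. Both processes are built on the \emph{same} dominating Yule system $\bR^N$ via the coupling of Lemma~\ref{lem:dominationbirth}, so they agree on every lineage until the first time that lineage's total mass hits $\kappa$. The key observation is therefore: on the event that \emph{no} lineage of the dominating system $\bR^N$ ever reaches size $\kappa$ before time $T$, the two processes coincide on $[0,T]$ and the total-variation difference is zero. On the complementary ``bad'' event, I will simply bound $\lVert \Pi^{N,\kappa}(t)-\Pi^N(t)\rVert_{\mathrm{TV}}\le 1$ (it is a difference of probability measures). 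So the strategy is a crude split into a main term (the probability of the bad event) plus a correction term.

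\begin{proof}[Proof of Proposition~\ref{prop:epsilonstory}]
Because $\bZ^{N,\kappa}$ and $\bZ^N$ are coupled through the common dominating system $\bR^N$ (Lemma~\ref{lem:dominationbirth}), lineage $j$ of the two processes evolves identically until the first time $\sigma_j$ at which $\bulletize{(\bZ^N_j(t))}=\kappa$, and in particular until the (earlier or equal) time at which $\bulletize{(\bR_j(t))}=\kappa$. Hence on the event
\[
G_{N,\kappa,T}\coloneqq\Big\{\bulletize{(\bR_j(T))}<\kappa\ \text{ for all } j\in[N]\Big\}
\]
we have $\bZ^{N,\kappa}(t)=\bZ^N(t)$ for all $t\in[0,T]$, so $\sup_{t\in[0,T]}\lVert\Pi^{N,\kappa}(t)-\Pi^N(t)\rVert_{\mathrm{TV}}=0$ there. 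Since $\Pi^{N,\kappa}(t)$ and $\Pi^N(t)$ are probability measures, their total-variation distance is always at most $1$, giving the deterministic bound
\[
\E\Big[\sup_{t\in[0,T]}\lVert\Pi^{N,\kappa}(t)-\Pi^N(t)\rVert_{\mathrm{TV}}\Big]\le \P\big(G_{N,\kappa,T}^{c}\big)
\le \sum_{j=1}^N \P\big(\bulletize{(\bR_j(T))}\ge\kappa\big)= N\,\P\big(\bulletize{(\bR_1(T))}\ge\kappa\big),
\]
using a union bound and that the $\bR_j$ are i.i.d.\ copies of $\bR_1$.
\end{proof}

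**Making the bound useful.** The bound above is $N$-dependent, which is not yet what the proposition asks for. The fix is that we get to choose $\kappa$ depending on $N$ (or rather, we must argue more carefully). Re-reading the statement: we need, for each fixed $\eta$, that for all \emph{sufficiently large} $\kappa$ there is a single $\varepsilon(\kappa,T)$, independent of $N$, with the stated bound and $\varepsilon(\kappa,T)\to 0$. Since the crude bound $N\,\P(\bulletize{(\bR_1(T))}\ge\kappa)$ blows up in $N$, I instead split the lineages into those that started small and those that started large. Recall $\bZ^N(0)$ has law $\alpha^N(\nu^N_0,\cdot)$ with $\nu^N_0\Rightarrow\nu$; so the empirical law of the initial masses $\bulletize{(\bZ^N_j(0))}$ converges, and for the \emph{fixed} $\eta$ we can pick $M=M(\eta)$ so that the proportion of lineages $j$ with $\bulletize{(\bZ^N_j(0))}>M$ is at most $\eta$ for all large $N$ (this handles the $\eta$ term). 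For a lineage starting at mass $\le M$, the dominating process started from mass $\le M$ reaches $\kappa$ by time $T$ with probability $p_{M,\kappa,T}$ that is \emph{independent of $N$} and, by non-explosiveness of the Yule-type process (Lemma~\ref{lem:limitodeyule}) and monotone convergence, satisfies $p_{M,\kappa,T}\to 0$ as $\kappa\to\infty$. The expected proportion of such lineages that get truncated by time $T$ is then at most $p_{M,\kappa,T}$, and on that (small) set of lineages we again bound the per-lineage contribution to the total-variation distance by its maximal value. Putting $\varepsilon(\kappa,T)\coloneqq p_{M(\eta),\kappa,T}$ gives exactly the claimed inequality, with $\varepsilon(\kappa,T)\to 0$ as $\kappa\to\infty$.

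**The main obstacle.** The genuinely delicate point is not the coupling — that is immediate from Lemma~\ref{lem:dominationbirth} — but rather controlling the \emph{supremum over $t\in[0,T]$} uniformly in $N$ while keeping the bound $N$-free. The two ingredients that make this work are (i) that a single lineage contributes at most $1/N$ to $\lVert\Pi^{N,\kappa}(t)-\Pi^N(t)\rVert_{\mathrm{TV}}$, so that the relevant quantity is the \emph{proportion} of truncated lineages, a quantity whose expectation is an average of $N$ identically-behaving per-lineage probabilities and hence does not grow with $N$; and (ii) that the truncation event for a lineage is monotone in $t$ (once a lineage is frozen it stays frozen, and $\bR_j$ is non-decreasing), so the supremum over $t\in[0,T]$ of the discrepancy equals the value at the single time $T$. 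Both are consequences of structure already established in Sections~\ref{sec:dominatingyule} and~\ref{sec:localizationproofs}; the remaining work is the bookkeeping of the initial-condition split described above and invoking non-explosiveness to send $p_{M,\kappa,T}\to0$.
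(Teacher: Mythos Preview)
Once you reach the ``Making the bound useful'' paragraph, your argument is essentially the paper's: split lineages by initial mass via tightness of $(\nu^N)$ (your $M(\eta)$ is the paper's $\kappa^\star$), bound the expected \emph{proportion} of small-start lineages whose dominating Yule copy exceeds $\kappa$ by an $N$-free quantity $\varepsilon(\kappa,T)\to 0$, and use monotonicity of $\bR^N$ to reduce the supremum over $[0,T]$ to time $T$. The only differences are cosmetic: the paper gives $\varepsilon(\kappa,T)$ explicitly via Markov's inequality on the negative-binomial Yule mean rather than invoking non-explosiveness abstractly, and your formal \texttt{proof} block ends with the discarded $N$-dependent union bound rather than with the correct argument that follows it.
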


\begin{proposition}[Convergence of empirical measure process in systems with freezing] \label{prop:convtrunc}
	We have $\Pi^{N,\kappa}\xRightarrow{N\to\infty}v^{\kappa}$,
	where $v^{\kappa}=(v^{\kappa}(t))_{t\geq 0}$ is the unique solution to the initial value problem: 
	$v^{\kappa}(0)=\nu\in \Mp(\N_0^{\typ})$, for $\by\in \N_0^{\typ}\setminus\kapemp$: $v^{\kappa}_{\{\by\}}(t)=\nu_{\{\by\}}(0)$; and for $\by\in \kapemp$:
	\begin{equation}
		\label{eq:limitodetruncated}
		\begin{split}
			(v_{\{\by\}}^{\kappa})'(t)=&-v_{\{\by\}}^{\kappa}(t) \sum_{i=1}^{\typ} \big(b^{i,\kappa}(\by,v^{\kappa}(t))+d^{i,\kappa}(\by,v^{\kappa}(t))+\sum_{k=1,\, k\neq i}^{\typ} m^{i,k,\kappa}(\by,v^{\kappa}(t)) \big)\\
			&+\sum_{i=1}^{\typ} \Big(v_{\{\by-\be_i\}}^{\kappa}(t)b^{i,\kappa}(\by-\be_i,v^{\kappa}(t)) + v_{\{\by+\be_i\}}^{\kappa}(t) d^{i,\kappa}(\by+\be_i,v^{\kappa}(t)) \\
			&\qquad \ + \sum_{k=1,\, k\neq i}^{\typ} v_{\{\by-\be_k+\be_i\}}^{\kappa}(t)\, m^{i,k,\kappa}(\by-\be_k+\be_i,v^{\kappa}(t))\Big).
		\end{split}
	\end{equation}
\end{proposition}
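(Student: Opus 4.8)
The plan is to run the classical martingale‑problem argument for mean‑field pure‑jump systems; here it goes through cleanly because, after truncation, the per‑lineage state space is essentially finite and all rates are uniformly bounded. As a first step I would dispose of well‑posedness of the limiting ODE. Since the truncated rates $b^{i,\kappa},d^{i,\kappa},m^{i,k,\kappa}$ vanish for $\by\notin\kapemp$, the system \eqref{eq:limitodetruncated} together with the prescription $v^\kappa_{\{\by\}}(t)\equiv\nu_{\{\by\}}$ off $\kapemp$ is really a finite‑dimensional ODE for the coordinates $(v^\kappa_{\{\by\}})_{\by\in\kapemp}$. Viewing its right‑hand side as a vector field $F^\kappa\colon\Mp(\N_0^d)\to$ (finite signed measures), each summand is a product of a coordinate lying in $[0,1]$ with a truncated rate bounded by $L(\kappa+1)$ and Lipschitz in $\nu$ w.r.t.\ $\lVert\cdot\rVert_{TV}$ by \ref{ass:lipschitz}, so $F^\kappa$ is bounded and globally Lipschitz; Picard--Lindel\"of then yields a unique global solution $v^\kappa$ with $v^\kappa(0)=\nu$. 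Summing the right‑hand side of \eqref{eq:limitodetruncated} over $\by\in\kapemp$ (using that the truncated birth rate vanishes whenever a birth would leave $\kapemp$) the terms telescope to zero, so $\sum_\by v^\kappa_{\{\by\}}$ is constant; and where a coordinate vanishes the derivative reduces to non‑negative inflow terms, so $v^\kappa(t)\in\Mp(\N_0^d)$ for all $t$, consistently with the Remark after Theorem~\ref{thm:conv}.

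Next I would prove relative compactness of $(\Pi^{N,\kappa})_N$ in $\bD(\R_+,\Mp(\N_0^d))$. By Proposition~\ref{prop:exchangeabilitytruncated} the truncated system is exchangeable, so by the same reasoning behind Proposition~\ref{prop:chaotic}(2) tightness of $(\Pi^{N,\kappa})_N$ reduces to tightness of the law of $\bZ^{N,\kappa}_1$. The latter is a pure‑jump process that, after at most one birth step, remains in $\kapemp$ together with its initial law (the latter uniformly tight because $\nu^N\to\nu$ in $\lVert\cdot\rVert_{TV}$), and whose total jump rate is bounded by a constant $C_\kappa$ depending only on $d,L,\kappa$; compact containment is therefore immediate and the Aldous condition follows from the uniform rate bound, so $(\Pi^{N,\kappa})_N$ is tight. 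Since the jumps of $\Pi^{N,\kappa}$ have total‑variation size $O(1/N)$, every subsequential limit is concentrated on $C(\R_+,\Mp(\N_0^d))$.

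The heart of the argument is identifying the limit. For $\by\in\N_0^d$ put $h_{\by}(\nu):=\nu_{\{\by\}}$. Because $h_{\by}$ is affine, the increments $h_{\by}(\nu+(\delta_{\by'}-\delta_\by)/N)-h_{\by}(\nu)$ equal $(\ind_{\by}(\by')-\ind_{\by}(\by))/N$, the factors $N$ in $B^{N,\kappa}$ cancel, and a short computation gives $B^{N,\kappa}h_{\by}(\nu)=F^\kappa_{\by}(\nu)$ — exactly the right‑hand side of \eqref{eq:limitodetruncated} — for every $N$ and $\nu$. Since $\Pi^{N,\kappa}$ is a bounded‑rate pure‑jump Markov process, Dynkin's formula makes
\[
M^{N,\kappa}_{\by}(t):=\Pi^{N,\kappa}(t)(\{\by\})-\nu^N_{\{\by\}}-\int_0^t F^\kappa_{\by}\bigl(\Pi^{N,\kappa}(s)\bigr)\,\dd s
\]
a martingale, which jumps by $\pm 1/N$ at total rate $O(N)$, so its predictable quadratic variation is $O(t/N)$ and Doob's inequality gives $\E[\sup_{s\le t}(M^{N,\kappa}_{\by}(s))^2]\to 0$. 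Along any subsequence with $\Pi^{N,\kappa}\Rightarrow\tilde\Pi$ I would pass to a Skorokhod representation and let $N\to\infty$ in this identity — using that the limit path is continuous (hence convergence is locally uniform), that $F^\kappa_{\by}$ is bounded and continuous, and that $\nu^N\to\nu$ — to conclude that almost surely, and simultaneously for all $\by\in\N_0^d$, $\tilde\Pi(t)(\{\by\})=\nu_{\{\by\}}+\int_0^t F^\kappa_{\by}(\tilde\Pi(s))\,\dd s$ for all $t$. Thus $\tilde\Pi$ is almost surely a solution of the initial value problem of the first step, hence $\tilde\Pi=v^\kappa$ by uniqueness; as every subsequential limit equals the deterministic path $v^\kappa$, we get $\Pi^{N,\kappa}\xRightarrow{N\to\infty}v^\kappa$. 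I expect the only genuinely delicate point to be this passage to the limit in the martingale identity on Skorokhod space together with the relative‑compactness bookkeeping of the previous step; everything else is forced by the uniform boundedness and Lipschitz continuity of the truncated rates. An alternative would be to quote an off‑the‑shelf mean‑field limit theorem for interacting pure‑jump processes with bounded Lipschitz rates (e.g.\ via the convergence‑of‑generators machinery in \cite[Ch.~4]{Ethier1986}), with the ODE well‑posedness of the first step supplying the uniqueness for the limiting martingale problem.
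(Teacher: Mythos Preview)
Your argument is correct, but it follows a genuinely different route from the paper's. The paper proves Proposition~\ref{prop:convtrunc} by invoking the abstract convergence theorem \cite[Cor.~4.8.16]{Ethier1986} (restated as Theorem~\ref{thm:kurtzconv}): it checks compact containment via the Yule domination of Lemma~\ref{lem:compactcontainment}, and verifies generator convergence on smooth test functions $h(\nu)=\tilde h(\nu_{\{\by^{(1)}\}},\dots,\nu_{\{\by^{(k)}\}})$ using a Taylor expansion to control the $O(1/N)$ discrepancy between $B^{N,\kappa}h$ and $B^\kappa h$. Your proof instead works directly with the affine coordinate functions $h_\by(\nu)=\nu_{\{\by\}}$, for which $B^{N,\kappa}h_\by=F^\kappa_\by$ \emph{exactly} (no Taylor error), and then identifies the limit by showing the Dynkin martingales have quadratic variation $O(t/N)\to 0$ and passing to the limit via Skorokhod representation plus continuity of limit points. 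For tightness you bypass the Yule coupling entirely and use only the uniform rate bound $C_\kappa$, which is all that is needed in the truncated setting. Amusingly, the ``alternative'' you sketch in your last sentence --- quoting the Ethier--Kurtz generator-convergence machinery --- is precisely what the paper does. Your route is more elementary and exploits the affine structure of the coordinate functions to avoid both the abstract theorem and the Taylor bookkeeping; the paper's route is more black-box but requires verifying the five hypotheses of Theorem~\ref{thm:kurtzconv}. Both rely on the same ODE well-posedness step, which you handle the same way (finite-dimensional reduction plus Lipschitz rates).
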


\begin{proposition}[Tightness of the empirical measure process] \label{prop:tightness}
The sequence $\{\Pi^N\}_{N\in \N}$ is tight.
\end{proposition}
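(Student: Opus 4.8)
The plan is to establish tightness of $\{\Pi^N\}_{N\in\N}$ by comparison with the dominating Yule-type system, which we already know converges (hence is tight) by Lemma~\ref{lem:convpir}, and then to transfer tightness via the two-sided truncation estimate of Proposition~\ref{prop:epsilonstory}. The basic strategy is the standard one for proving tightness of c\`adl\`ag processes in a Polish space: verify a compact containment condition together with an Aldous-type (or modulus-of-continuity) control on the oscillations. However, the cleanest route here is to exploit the ordering $\Pi^N(t)\leq \Pi^N_{\bR}(t)$ in the stochastic domination sense, together with the fact that $\Pi^N_{\bR}\xrightarrow{N\to\infty} r$ almost surely in $\bD(\R_+,\Mp(\N_0^d))$ (uniformly on compacts in total variation), which makes $\{\Pi^N_{\bR}\}_{N}$ $C$-tight.

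First I would set up the compact containment condition. By Prokhorov's theorem, a set of probability measures on $\Mp(\N_0^d)$ is tight iff for every $\eta>0$ there is a compact $\mathcal{K}_\eta\subseteq\Mp(\N_0^d)$ with all measures assigning mass $\geq 1-\eta$ to $\mathcal{K}_\eta$; compact subsets of $\Mp(\N_0^d)$ (with the total-variation topology restricted to $\Mpn$-type sets, or more robustly the weak topology) are characterized by a uniform tightness of the underlying measures on $\N_0^d$, i.e.\ for every $\eta$ there is $K$ with $\nu(\{\by:\bulletize{\by}>K\})\leq\eta$. Using the domination $\Pi^N(t)\leq\Pi^N_{\bR}(t)$ and the monotonicity of $\bR^N$ in $t$, for any $T\geq 0$ we have $\Pi^N(t)(\{\by:\bulletize{\by}>K\})\leq\Pi^N_{\bR}(T)(\{\by:\bulletize{\by}>K\})$ for all $t\in[0,T]$, and the right side has expectation equal to $\P(\bulletize{\bR_1(T)}>K)$, which tends to $0$ as $K\to\infty$ uniformly in $N$ since $\bR_1$ is non-explosive. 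A Markov-inequality argument then gives the uniform compact containment of the processes $\Pi^N$ on $[0,T]$ with high probability, uniformly in $N$.

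Next I would handle the oscillation/regularity condition. Rather than estimating the jumps of $\Pi^N$ directly from its generator $B^N$ — which is awkward because the rates are only of linear growth and the total jump rate of the $N$-system is of order $N\cdot\bulletized{\bZ^N}$ — I would instead argue as follows. For fixed $\kappa$, Proposition~\ref{prop:convtrunc} gives $\Pi^{N,\kappa}\Rightarrow v^\kappa$, so in particular $\{\Pi^{N,\kappa}\}_N$ is tight for each $\kappa$; pick, for a given tolerance, a common modulus-of-continuity bound / compact set in $\bD(\R_+,\Mp(\N_0^d))$ witnessing this. Proposition~\ref{prop:epsilonstory} then says that $\Pi^{N,\kappa}$ and $\Pi^N$ are uniformly (in $N$) close in the sup-over-$[0,T]$ total variation distance, up to an error $\eta+\varepsilon(\kappa,T)$ that can be made arbitrarily small by first choosing $\kappa$ large (to kill $\eta$, as allowed by the statement) and then using $\varepsilon(\kappa,T)\to 0$. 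Since a uniform sup-norm approximation of a tight family by another family forces the approximating family to be tight as well (the relevant functionals — compact containment and the c\`adl\`ag modulus $w'$ — are controlled up to the approximation error, because a uniform TV-perturbation of size $\delta$ changes $w'(\cdot,\theta,T)$ by at most $2\delta$ and changes the compact-containment mass by at most $\delta$), we conclude that $\{\Pi^N\}_N$ is tight. Finally I would invoke Prokhorov and the Jakubowski/Aldous criterion for $\bD(\R_+,E)$ with $E=\Mp(\N_0^d)$ Polish to package the compact-containment and modulus estimates into tightness of the laws.

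The main obstacle I expect is the transfer step in the last paragraph: making precise the claim that a uniform-in-$N$, sup-over-$[0,T]$, total-variation approximation of one family of c\`adl\`ag processes by a tight family yields tightness of the approximated family. One must check that the $\bD$-space tightness criterion (say, the modified modulus of continuity $w'_\delta$ together with the compact containment condition, as in Chapter~3 of \cite{Ethier1986}) is stable under such uniform perturbations — this is true but requires a short lemma, since the $w'$ modulus involves an infimum over partitions and one needs that perturbing a path uniformly in the metric by $\delta$ perturbs $w'$ by at most a constant multiple of $\delta$. An alternative that sidesteps this is to prove tightness of $\{\Pi^N\}_N$ directly from Proposition~\ref{prop:chaotic}(2): it suffices to show the law of $\bZ^N_1$ under $P^N$ is tight in $\bD(\R_+,\N_0^d)$, and since $\bZ^N_1(t)\leq\bR_1(t)$ coordinatewise for all $t$ by the coupling of Lemma~\ref{lem:dominationbirth}, the single-particle process is dominated by the fixed non-explosive process $\bR_1$, which immediately gives the compact containment, while its jumps are single coordinate steps with rate bounded by a linear functional of $\bR_1(t)$ — and a stochastically dominated jump process with a dominated jump rate is tight by a standard Aldous argument. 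I would likely present this second, more self-contained route as the main proof and mention the approximation route as a remark.
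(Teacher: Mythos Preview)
Your second, ``more self-contained'' route --- reducing via Proposition~\ref{prop:chaotic}(2) to tightness of the single coordinate $\bZ^N_1$, then verifying compact containment and an Aldous-type oscillation bound by comparison with the dominating Yule-type process $\bR^N_1$ --- is exactly what the paper does. The paper invokes the Aldous--Rebolledo criterion, gets condition~(1) from $\bZ^N_1 \leq \bR^N_1$ and non-explosion of $\bR^N_1$, and gets condition~(2) by bounding $\lvert \bulletize{(\bZ^N_1(T))} - \bulletize{(\bZ^N_1(S))}\rvert$ by the increment of $\bulletize{(\bR^N_1)}$ over $[S,S+\theta]$ (this works because in the coupling of Lemma~\ref{lem:dominationbirth} every jump of $\bZ^N$ is embedded in a jump of $\bR^N$), and then controlling the latter via the Yule moment.

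Your first route --- transferring tightness from $\{\Pi^{N,\kappa}\}_N$ to $\{\Pi^N\}_N$ using Propositions~\ref{prop:epsilonstory} and~\ref{prop:convtrunc} --- is a genuinely different argument. It is valid, and the ``transfer lemma'' you flag (stability of the $w'$-modulus and compact containment under uniform-in-$t$ perturbations bounded in expectation) is straightforward to prove. The trade-off is that it makes Proposition~\ref{prop:tightness} depend on the other two key propositions rather than standing on its own, and it requires the extra stability lemma; what it buys is that you never touch the Aldous criterion for the unbounded-rate process directly. The paper's choice keeps the three propositions logically independent and uses only the Yule coupling, which is cleaner. Since you already identified the paper's route as your preferred presentation, you are in good shape.
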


We now prove Theorem~\ref{thm:conv}.
\begin{proof}[Proof of Theorem~\ref{thm:conv}]
Fix $T > 0$. 	
 By Proposition~\ref{prop:tightness}, $(\Pi^N)_{N \in \N}$ is tight. 
	Consider $(\Pi^{N_n})_{n \in \N}$ for a strictly increasing sequence $(N_n)_{n\in \N}$ in $\N$.
	There exists a weakly convergent subsequence $(\Pi^{N_{n_\ell}})_{\ell\in \N}$ and a c\`adl\`ag $\Mp(\N_0^{\typ})$-valued process $\Pi^\star$ with $\Pi^{N_{n_\ell}}\xRightarrow{\ell\to\infty} \Pi^{\star}$.
	 
  On the one hand, 
	by Proposition~\ref{prop:epsilonstory},
 \[
 \E[\sup_{t\in[0,T]}\lVert \Pi^{N_{n_\ell},\kappa}(t)-\Pi^{N_{n_\ell}}(t)\rVert_{\mathrm{TV}}]
 \leq 
 \varepsilon(\kappa,T).
 \]
	On the other hand, by Proposition~\ref{prop:convtrunc}, 
	$\Pi^{N_{n_\ell},\kappa}\xRightarrow{\ell\to\infty}v^{\kappa}$.

Let $\rho$ be the following standard metric giving the Skorohod topology on the
 space
 $D([0,T],\Mp{(\N_0^{\typ})})$ 
 of c\`adl\`ag paths from
 $[0,T]$ to $\Mp(\N_0^{\typ})$,
 \[
 \rho(\mu, \nu)
 :=
 \inf_{\lambda \in \Lambda}
 \left(\sup_{t \in [0,T]} |t - \lambda(t)|
 \vee
 \sup_{t \in [0,T]}
 \|\mu(t) - \nu \circ \lambda(t)\|_{\mathrm{TV}}\right),
 \]
where the infimum is over all continuous, increasing, bijections $\lambda: [0,T] \rightarrow [0,T]$.
(cf. equation (12.13) of \cite{Billingsley_99}.  
Let $W_1$ be the Wasserstein--$1$
metric on the space of probability measures on $D([0,T],{\Mp(\N_0^{\typ})})$ 
corresponding to $\rho$; that is,
\[
W_1(P,Q) := \inf_R \int \rho(\mu, \nu) \, R(d\mu, d\nu),
\]
where the infimum is over all probability measures $R$ on
$D([0,T],{\Mp(\N_0^{\typ})}) \times D([0,T],{\Mp(\N_0^{\typ})})$ that have respective marginals $P$ and $Q$.  Recall that $W_1$ metrizes weak convergence on the space of probability measures on $D([0,T],{\Mp(\N_0^{\typ})})$ (see, for example, Theorem 6.9 of \cite{Villani_09}).  If $\Phi$ and $\Psi$ are random elements of
$D([0,T],{\Mp(\N_0^{\typ})})$,  write $W_1(\Phi, \Psi)$ for the
Wasserstein--$1$ distance between their respective distributions.  

Observe that by setting $\Lambda(t) = t$,  we get that 
$\rho( \mu, \nu) \leq \sup_{t \in [0,T]}   \|\mu(t) - \nu(t)\|_{\mathrm{TV}},$ which implies that
\[ W_1(\Phi, \Psi) \leq \inf_R \int \sup_{t \in [0,T]}   \|\mu(t) - \nu(t)\|_{\mathrm{TV}} R(d\mu, d\nu).
\]
If $\Phi$ and $\Psi$ happen to be defined on the same probability space, we can choose as $R$ the joint distribution of $\Phi$ and $\Psi$ on that space to get
\[
W_1(\Phi, \Psi)
\le
\E \left [\sup_{t\in[0,T]}\lVert \Phi(t) -\Psi(t)\rVert_{\mathrm{TV}} \right ].
\]

Now, 
\[
\begin{split}
W_1 \left ( \Pi^{\star}, v^{\kappa} \right )\\
		\leq  & W_1 \left ( \Pi^{\star},\Pi^{N_{n_\ell}} \right ) 
		+ \E \left [ \sup_{t\in[0,T]} \lVert \Pi^{N_{n_\ell}}(t)-\Pi^{N_{n_\ell},\kappa}(t)\rVert_{TV} \right ] \\
		& \quad +  W_1 \left ( \Pi^{N_{n_\ell},\kappa}, v^{\kappa} \right ). \\
\end{split}
\]
	Taking $\ell\to\infty$ leads to the bound
	\[W_1(\Pi^{\star}, v^{\kappa}) \leq \varepsilon(\kappa,T)
 \]
	independent of the chosen subsequence $(N_{n_\ell})$. 
	Since $\varepsilon(\kappa,T)\to 0$ as $\kappa\to\infty$, we obtain 
$v^{\kappa}\xRightarrow{\kappa\to\infty}\Pi^\star$ and
 $\Pi^N\xRightarrow{N\to\infty} \Pi^\star$ upon taking $\kappa \to \infty$.
    In particular, $\Pi^\star=v$ of \eqref{eq:limitode}.
\end{proof}

\subsection{Convergence of the dynamics under freezing}
\label{sec:convergencetruncated}

To establish the convergence of the system of MTBDPs that are frozen once they reach the set of frozen states parameterized by~$\kappa$, 
we employ standard methods. 
In this regard, we rely on the following result, which is elaborated upon in \citet[Ch.~4]{Ethier1986} concerning the notation used.

\begin{theorem}\citet[Corollary 4.8.16]{Ethier1986} \label{thm:kurtzconv}
	Let $(E,r)$ be complete and separable 
	and $E_N\subset E$. 
	Let $\mathcal{A}\subset \bar{C}(E)\times \bar{C}(E)$ and 
	$v\in \Mp(E)$.
	Assume 
	\begin{enumerate}
		\item the $D(\R_+,E)$ martingale problem for $(\mathcal{A},v)$ has at most one solution, and 
		the closure of the linear span of $\mathcal{D}(\mathcal{A})$, the domain of $\mathcal{A}$, contains an algebra that separates points,
		\item for each $N\in \N$, $X_N$ is a progressively measurable process 
		with measurable contraction semigroup $\{T_N(t)\}$, 
		full generator $\hat{\mathcal{A}}_N$, 
		and sample paths in $D(\R_+, E_N)$,
		\item $\{X_N\}$ satisfies the compact containment condition; 
		that is, for every $\eta>0$ and $T>0$ 
		there is a compact set $\Gamma_{\eta,T}\subset E$ 
		such that $\inf_N \P(X_N(t)\in \Gamma_{\eta,T} \text{ for }0\leq t\leq T)\geq 1-\eta$,
		\item for each $(f,g)\in \mathcal{A}$ and $T>0$, there exists $(f_N,g_N)\in \hat{\mathcal{A}}_N$ and $G_N\subset E_N$ such that 
		$\lim_{N\to\infty} \P(X_N(t)\in G_N,\, 0\leq t\leq T)=1$, 
		$\sup_{N}\lVert f_N\rVert <\infty$, and 
		$\lim_{N\to\infty}\sup_{x\in G_N}\lVert f(x)-f_N(x)\rVert=\lim_{N\to\infty}\sup_{x\in G_N}\lVert g(x)-g_N(x)\rVert=0$,
		\item $X_N(0)\xRightarrow{N\to\infty} v$. 
	\end{enumerate} 
	Then, there exists a solution~$X$ of the $D(\R_+,E)$ martingale problem for $(\mathcal{A},v)$ 
	and $X_N\xRightarrow[]{N\to\infty}X.$ 
\end{theorem}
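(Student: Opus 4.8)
The plan is to follow the classical three-step route for proving weak convergence of Markov processes via the martingale problem: (i) establish relative compactness of the laws of $\{X_N\}$ in $D_E[0,\infty)$; (ii) show that every weak subsequential limit solves the $D_E[0,\infty)$ martingale problem for $(\mathcal{A},v)$; and (iii) invoke the uniqueness hypothesis to conclude that all subsequential limits coincide, thereby upgrading relative compactness to full weak convergence and simultaneously producing a solution. The algebra that separates points in hypothesis~1 will do double duty, entering both the relative-compactness criterion and the final uniqueness-of-law argument.

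First I would prove relative compactness. By the compact containment condition (hypothesis~3) the spatial part is controlled, so by the standard criterion (Ethier--Kurtz, Thm.~3.9.1) it suffices to show that, for each $f$ in the separating algebra, the real-valued process $f(X_N)$ is relatively compact in $D_{\R}[0,\infty)$. For $(f,g)\in\mathcal{A}$ with approximants $(f_N,g_N)\in\hat{\mathcal{A}}_N$ from hypothesis~4, the full-generator property yields that
\[
M_N^f(t):=f_N(X_N(t))-f_N(X_N(0))-\int_0^t g_N(X_N(s))\,\dd s
\]
is a martingale. Since $\sup_N\lVert f_N\rVert<\infty$ and, because $g$ is bounded and $g_N\to g$ uniformly on $G_N$, also $\sup_N\lVert g_N\rVert<\infty$, the finite-variation part has uniformly Lipschitz paths and the martingale part has uniformly controlled increments; a quadratic-variation estimate together with these bounds gives the Aldous-type oscillation control needed for relative compactness of $f(X_N)$, which then passes to the algebra by uniform approximation.

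Next I would identify the limit. Passing to a weakly convergent subsequence $X_{N_k}\Rightarrow X$ and invoking the Skorohod representation theorem, I may assume $X_{N_k}\to X$ almost surely in $D_E[0,\infty)$. For $(f,g)\in\mathcal{A}$, times $r_1\le\cdots\le r_m\le s\le t$ chosen outside the countable set of fixed discontinuities of $X$, and $h\in\bar{C}(E^m)$, the martingale property for $X_{N_k}$ reads
\[
\E\Big[\big(f_{N_k}(X_{N_k}(t))-f_{N_k}(X_{N_k}(s))-\textstyle\int_s^t g_{N_k}(X_{N_k}(u))\,\dd u\big)\,h(X_{N_k}(r_1),\ldots,X_{N_k}(r_m))\Big]=0.
\]
The hard part is the limit passage here: the approximants $f_{N_k},g_{N_k}$ are $N$-dependent and converge to $f,g$ only uniformly on the $N$-dependent sets $G_N$, on which $X_{N_k}$ resides merely with probability tending to one. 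I would control this by splitting on the event $\{X_{N_k}(u)\in G_{N_k},\ 0\le u\le t\}$, whose probability tends to $1$ by hypothesis~4, bounding the complementary contribution via $\sup_N\lVert f_N\rVert<\infty$ and $\sup_N\lVert g_N\rVert<\infty$, and then using the uniform convergence $f_N\to f$, $g_N\to g$ on $G_N$ together with the continuity of $f,g,h$ and dominated convergence to replace $f_{N_k},g_{N_k}$ by $f,g$ and pass to the limit. This yields the martingale identity for $X$, and since $X(0)$ has law $v$ by hypothesis~5, $X$ solves the martingale problem for $(\mathcal{A},v)$; in particular a solution exists.

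Finally, the uniqueness hypothesis forces every subsequential limit to have the same law. Here the separating algebra guarantees that the martingale problem determines the finite-dimensional distributions, so uniqueness of the one-dimensional martingale problem upgrades to uniqueness of the process law on $D_E[0,\infty)$. Relative compactness together with the coincidence of all subsequential limits then gives $X_N\Rightarrow X$, completing the proof. The principal obstacle throughout is the careful limit passage in step~(ii) under the $N$-dependent approximation restricted to the sets $G_N$; once that is handled, tightness and uniqueness assemble into convergence in a routine way.
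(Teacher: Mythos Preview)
The paper does not prove this theorem at all: it is quoted as \citet[Corollary~4.8.16]{Ethier1986} and used as a black-box tool in the proof of Proposition~\ref{prop:convtrunc}. So there is no ``paper's own proof'' to compare against; your proposal is essentially the standard Ethier--Kurtz argument behind that corollary (tightness via compact containment plus generator bounds, identification of limits via the martingale problem with $N$-dependent approximants on the sets $G_N$, and uniqueness to close), which is the right outline if one actually wanted to reprove the cited result.
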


To apply Theorem~\ref{thm:kurtzconv}, one of the things to check is that the sequence $\{\Pi^{N,\kappa}\}_{N\in \N}$ satisfies the compact containment condition.
We will prove the following stronger result.

\begin{lemma}[Compact containment]\label{lem:compactcontainment}
	The sequences $(\Pi^N)_{N \in \N}$ and $(\Pi^{N,\kappa})_{N \in \N}$ both satisfy the compact containment condition.
\end{lemma}

\begin{proof}
	Fix $\eta,T>0$.
    By how we have coupled together the construction of the processes involved,
	we have for any $t\in[0,T]$,
	\begin{equation}
    \label{eq:monotonicity_various}
    \begin{split}
    \Pi^N(t)  \preceq \Pi^N_{\bR}(t) \preceq \Pi^N_{\bR}(T)\\
    \Pi^{N,\kappa}(t) \preceq 
    \Pi^N_{\bR}(t) \preceq \Pi^N_{\bR}(T).\\
    \end{split}
\end{equation}
 
    Recall that $r$ is the solution to the Kolmogorov forward equation of a non-explosive Markov process that is essentially a Yule process.

    Since $\Pi^N_{\bR}(T)\xrightarrow{N\to\infty}r(T)$, by Lemma~\ref{lem:convpir},
    we have that the collection of
    distributions of the sequence
    $\{\Pi^N_{\bR}(T)\}_{N\in \N}$ is tight.  Therefore, there exists a compact set $K_{\eta,T} \subseteq  \Mp(\N_0^\typ)$ such that
    $\P(\Pi_{\bR}^N(T)\in K_{\eta, T})\geq 1-\eta$ for all $N$. 

    It only remains to note that if $K$ is a compact subset of $\Mp(\N_0^{\typ})$, then so is the set $\bigcup_{\nu \in K} \{\mu 
 \in \Mp(\N_0^{\typ}) : \mu \preceq \nu\}$ and then apply
    \eqref{eq:monotonicity_various}.
\end{proof}

We are now prepared to prove the convergence of the empirical measure process in a system with freezing.

\begin{proof}[Proof of Proposition~\ref{prop:convtrunc}]
	First, we note that the initial value problem can be reduced to a finite system of ODEs.
	Its right-hand side is Lipschitz continuous because the rates can be bounded using Assumption~\ref{ass:growthlip}
	and because $\by\in \kapemp$.
	The existence and uniqueness of a solution to this system follow from classic theory (e.g. \cite[Chapter~1]{Deimling1977}).
	Note that $v^{\kappa}$ also solves (uniquely) the $(B^{N,\kappa},\nu^\kappa)$ martingale problem,
    because the martingale problem and the ODE in this 
	frozen (thus finite-dimensional) setting are equivalent \cite[Corollary 1.3]{Kurtz2011a}. 
	
	We verify that the conditions of Theorem~\ref{thm:kurtzconv} are satisfied. 
	To this end, take 
	$E_N=\Mpn(\kapemp)$ and $E=\Mp(\kapemp)$ in Theorem~\ref{thm:kurtzconv}.
        The corresponding generators that we are interested in are $B^{N,\kappa},$ as defined before Remark \ref{rem:exchangeabilitytruncated}, and $B^\kappa \coloneqq B^\kappa_\birth + B^\kappa_\death + B^\kappa_\mut$, where $$B^\kappa_\birth h (\nu) = \sum_{\by\in \kapemp}\sum_{i=1}^{\typ} \nu_{\{\by\}} b^{i,\kappa}(\by,\nu)\left [ \frac{\partial h(\nu)}{\partial \nu_{\{\by+\be_i\}}} - \frac{\partial h(\nu)}{\partial \nu_{\{\by\}}}\right ],$$ and $B^\kappa_\death$ and $B^\kappa_\mut$ are obtained similarly by modifying the definitions of $B^N_\death$ and $B^N_\mut$ before Proposition \ref{prop:exchangeability}.
        
	That there is at most one solution to the $(B^{N,\kappa},\nu^\kappa)$ martingale problem
	follows from the discussion at the beginning of this proof.
	Moreover, we have that 
	\[ \left \{ f(v)=\prod_{\by\in Y}g_\by(v_\by)\text{ with }g_\by\in \hat{C}^1(\R) \text{ and }Y\subset \N_0^{\typ}, \lvert Y\rvert <\infty \right \}\subset \bar{C}^{1}\left ( \Mp( \kapemp )\right ) \] 
	is an algebra that separates points. 
	Thus, (1) holds. 
	$\Pi^{N,\kappa}$ is an $\Mpn(\kapemp)$-valued adapted, c{\`a}dl{\`a}g Markov process and thus progressively measurable. 
	Hence, (2) holds. 
    Lemma~\ref{lem:compactcontainment} yields that (3) holds.
	For (4), fix $h\in \bar{C}^{1}( \Mp ( \kapemp ) )$. 
	Without loss of generality, 
	we can assume that \[h(\nu)=\tilde{h}(\nu_{\{\by^{(1)}\}},\ldots, \nu_{\{\by^{(k)}\}}) \]
	for some $\tilde{h}\in \bar{C}^1([0,1]^k)$ with $k\in \N$, where $\by^{(1)},\ldots,\by^{(k)}\in \kapemp$. 
	We have to find a sequence $\{h^N\}$ of functions in the domain of the generator of $\Pi^{N,\kappa}$
	that approximates $h$ (recall its form from~\eqref{eq:generatorempiricalN}; but with $f\in C((\kapemp)^N)$ because the frozen system state space is compact). 
	To this end, set
	\[\tilde{f}^N(\bz)=\tilde{h}(\pi_{\bz}( \{\by^{(1)}\} ) ,\ldots, \pi_{\bz}(\{\by^{(k)}\})) \]
	and 
 \begin{align*}
     h^N(\nu) & = \frac{1}{N!} \prod_{\substack{\bx\in \N_0^{\typ}:\\  \bx\in \supp(\pi_{\bz})}}(N\pi_{\bz}(\{\bx\}))! \sum_{\substack{\bz\in (\N_0^{\typ})^N:\\ \pi_{\bz}=\nu}} \tilde{f}^N(\bz) \\
     & =\tilde{h}(\nu_{\{\by^{(1)}\}},\ldots, \nu_{\{\by^{(k)}\}}).
 \end{align*}
	$h^N$ is in the domain of $B^{N,\kappa}$. The only difference between $h$ and $h^N$ is that $h^N$ is only defined on $\Mpn(\kapemp),$ while the domain of $h$ is $\Mp(\kapemp),$ and the two functions agree on $\Mpn(\kapemp)$.
	This implies that
	\[ \sup_{\nu\in \Mpn(\kapemp)}\lvert h(\nu)-h^N(\nu)\rvert=0,\]
        so in particular, the limit as $N \to \infty$ is 0.
	Since $\tilde{h}$ is bounded, also $\sup_{N}\lVert h^N\rVert<\infty$.
	Next, we show
	\begin{equation}
		\sup_{\nu\in \Mpn(\N_0^{\typ})} \lvert B^{N,\kappa}h^N(\nu)-B^{\kappa}h(\nu)\rvert \xrightarrow{N\to\infty}0.\label{eq:uniconvgen}
	\end{equation} 
	To this end, we start showing $\sup_{\nu\in \Mpn( \kapemp )} \lvert (B_\birth^{N,\kappa}h^N(\nu)-B_\birth^\kappa h(\nu))\rvert \xrightarrow{N\to\infty}0$. 
	Note that by Taylor's formula and~\ref{ass:growth},
	\begin{equation*}
		\begin{split}
			&\ \left \lvert B^{N,\kappa}_\birth h^N(\nu)-B_\birth^{\kappa}h(\nu) \right \rvert\\
			&=\sum_{\by\in \kapemp}\sum_{i=1}^{\typ} \nu_{\{\by\}} b^{i,\kappa}(\by,\nu)\left [N\Big(h\Big(\nu+\frac{\delta_{\by+\be_i}-\delta_{\by}}{N} \Big)-h(\nu)\Big)- \frac{\partial h(\nu)}{\partial \nu_{\{\by+\be_i\}}} +\frac{\partial h(\nu)}{\partial \nu_{\{\by\}}}\right ]\\
			&\leq \sum_{\by\in \{\tilde{\by}^{(1)},\ldots,\by^{(k)} \}}L\typ(\by_\bullet + 1 ) \cdot O(N^{-1}).
		\end{split} 
	\end{equation*}
	Since the right-hand side is independent of $\nu$,
	\[\sup_{\nu\in \Mpn( \kapemp )} \lvert B_\birth^{N, \kappa} h^N(\nu)-B_\birth^\kappa h(v)\rvert \xrightarrow{N\to\infty}0. \]
	Analogously, it can be shown that 
	$\sup_{\nu\in \Mpn(\kapemp)} \lvert B_\death^{N,\kappa} h^N-B_\death^\kappa h(\nu)\rvert \xrightarrow{N\to\infty}0$ and $\sup_{\nu\in \Mpn(\kapemp)} \lvert B_\mut^{N, \kappa} h^N-B_\mut^\kappa h(\nu)\rvert \xrightarrow{N\to\infty}0$. 
	Then (4) follows by the triangle inequality.
	By assumption, (5) holds. 
	In particular, we have checked (1)--(5) of Theorem~\ref{thm:kurtzconv} and thus the result follows.
\end{proof}

Next, we prove the bound on $\E[\sup_{t\in[0,T]}\lVert \Pi^{N,\kappa}(t)-\Pi^{N}(t)\rVert_{\mathrm{TV}}]$ that is uniform in $N$.

\begin{proof}[Proof of Proposition~\ref{prop:epsilonstory}]	
	Fix $T>0$. 
	For $u \leq \inf\{s\geq 0: \bulletize{\bZ^N_j(s)} = \kappa\}$, we have $\bZ^N_j(u)=\bZ^{N,\kappa}_j(u)$, $j \in [N]$.
	Thus, for $0 \leq t \leq T$, 
 $\lvert\{j \in N:\bZ^N_j(t)\neq\bZ^{N,\kappa}_j(t)\}\rvert
 \leq 
 \lvert\{j \in [N] :\bR^N_j(t) \notin \kapemp)\}\rvert 
 \leq
 \lvert\{j \in [N] :\bR^N_j(T) \notin \kapemp)\}\rvert
 $.
	In words: 
	the count of families that have different compositions under the original and frozen dynamics 
	is bounded from above by 
	the count of families in the dominating pure-birth-type process 
	that exited $\kapemp$. 
 Thus,
 \[
 \begin{split}
& \E[\sup_{t\in[0,T]}\lVert \Pi^{N,\kappa}(t)-\Pi^{N}(t)\rVert_{\mathrm{TV}}] \\
& \quad \le \E\left[\sup_{t\in[0,T]} \frac{1}{N} \lvert\{j \in N:\bZ^N_j(t)\neq\bZ^{N,\kappa}_j(t)\}\rvert \right] \\
& \quad \le \E\left[ \frac{1}{N} \lvert\{j \in [N] :\bR^N_j(T) \notin \kapemp)\} \right] \\
& \quad = \P(\bR_1^{N}(T) \notin \kapemp). \\
\end{split} 
\]

 We know, however, from Lemma~\ref{lem:convpir}(i) that the sequence $\{\bR_1^N(T)\}_{N \in \N}$ is weakly convergent and hence tight, so $\varepsilon(\kappa,T) := \sup_{N \in \N} \P(\bR_1^{N}(T) \notin \kapemp)$ has the desired properties.
 
\end{proof}

We now address the tightness of the sequence $\{\Pi^N\}_{N \in \N}$.

\begin{proof}[Proof of Proposition~\ref{prop:tightness}]
	From Theorems 23.8 and 23.11 of \cite{Kallenberg2021}, it suffices to check the following.
 \begin{enumerate}
		\item For every $\eta>0$ and $T>0$ 
		there is a compact set $\Gamma_{\eta,T}\subset \Mp(\N_0^{\typ})$
		such that $\inf_N \P(\Pi^N(t)\in \Gamma_{\eta,T} \text{ for }0\leq t\leq T)\geq 1-\eta$.
		\item For all $T > 0$,  
		\begin{equation*}
			\lim_{\theta\searrow 0} \sup_{N \in \N} \sup_{S \in \mathcal{S}_T^N} \sup_{0 \le u \le \theta}\E[\lVert \Pi^N(S+u)-\Pi^{N}(S)\rVert_{TV} ] =0,
		\end{equation*}
	\end{enumerate}
 where $\mathcal{S}_T^N$ is the set of all discrete $\sigma(\Pi^N)$-stopping times that are bounded by~$T$. 
 
 Part 1 has been verified in Lemma~\ref{lem:compactcontainment}.
 
 For Part $2$, note that a.s.
	\begin{align*}
		& \lVert \Pi^N(S+u)-\Pi^N(S)\rVert_{TV}\\ 
  & \quad =\frac{1}{2}\frac{1}{N}\sum_{\by\in \N_0^{\typ}} \lvert \lvert \{j \in [N]:\bZ_j^N(S+u)=\by\}\rvert-\lvert \{j \in [N] :\bZ_j^N(S)=\by\}\rvert\rvert \\
		& \quad \leq \frac{1}{2} \frac{1}{N} \sum_{\by\in \N_0^{\typ}}\sum_{j=1}^N  \lvert \ind_{\{\bZ^N_j(S+u)=\by\}}-\ind_{\{\bZ^N_j(S)=\by\}}\rvert \\
		& \quad \leq  \frac{1}{N} \sum_{j=1}^N   \ind_{\{\bZ^N_j(S+u)\neq\bZ^N_j(S)\}}.
	\end{align*}
	In particular, using exchangeability, for $u \in [0,\theta]$ and $S \in \mathcal{S}_T^N$
	\begin{align*}
		\E[\lVert\Pi^N(S+u)-\Pi^N(S)\rVert_{TV}]
  &  \leq \P(\bZ_1^N(S+u)\neq \bZ_1^N(S)) \\
  &  \leq \P( \|\bZ_1^N(S+u)-\bZ_1^N(S)\|_1 \geq\varepsilon) \\
  & \leq \P( \bulletize{\bR_1^N(S+u)}-\bulletize{\bR_1^N(S)} \geq\varepsilon) \\
  & \leq \P( \bulletize{\bR_1^N(S+\theta)}-\bulletize{\bR_1^N(S)} \geq\varepsilon) \\
  & \leq \P( \bulletize{\bR_1^N(T+\theta)}-\bulletize{\bR_1^N(T)} \geq\varepsilon)
	\end{align*}
        for any $\varepsilon>0$.
        For all $N \in \N$, $\lim_{\theta \searrow 0}(\bulletize{\bR_1^N(T+\theta)}-\bulletize{\bR_1^N(T)}) = 0$ almost surely.  Also, by Lemma~\ref{lem:convpir}(i), $\bulletize{\bR_1^N(T+\theta)}-\bulletize{\bR_1^N(T)}$ converges in distribution to $\bulletize{\bR_1^\infty(T+\theta)}-\bulletize{\bR_1^\infty(T)}$ and $\lim_{\theta \searrow 0}(\bulletize{\bR_1^\infty(T+\theta)}-\bulletize{\bR_1^\infty(T)}) = 0$ almost surely.  Combining these observations gives Part 2.
\end{proof}

Finally, we address the convergence
of the sequence $(\bZ_1^N, \ldots, \bZ_k^N)_{N \in \N}$ for each fixed $k \in \N$. 

\begin{lemma}\label{lem:tightnessZ1}
For each $k \in \N$, the sequence 
$\{(\bZ^N_1, \ldots, \bZ_k^N)\}_{N\in \N}$ is tight.
\end{lemma}

\begin{proof}
 From Lemma~\ref{lem:dominationbirth} we know that 
 almost surely for all $0 \le s< t$, all $N \in \N$, $j \in [N]$, and $i \in [D]$, that $Z_{j,i}^N(t) \le R_{j,i}^N(t)$
and
$|Z_{j,i}^N(t) - Z_{j,i}^N(s)| \le |R_{j,i}^N(t) - R_{j,i}^N(s)|$.
It follows from these comparisons, the necessary and sufficient conditions for tightness in Theorem~7.2 in Chapter~4 of \cite{Ethier1986}, and the convergence (hence tightness)  of the sequence $\{(\bR_1^N, \ldots, \bR_k^N)\}_{N \in \N}$ established in Lemma~\ref{lem:convpir}, that the sequence $\{(\bZ_1^N, \ldots, \bZ_k^N)\}_{N \in \N}$ is tight.
\end{proof}

\begin{proof}[Proof of Corollary~\ref{cor:conv}]
For ease of notation, set
$\bZ_{[k]}^N := (\bZ_1^N, \ldots, \bZ_k^N)$, $N \in \N$.
We know from Remark~\ref{rem:chaos_implication} that $(\bZ_{[k]}^N(0))_{N \in \N}$ converges in distribution to a random element with distribution $\nu^{\otimes k}$.

By Lemma~\ref{lem:tightnessZ1}, the sequence 
$(\bZ_{[k]}^N)_{N \in \N}$ is tight.

Note for any function $f \in C_c((\N_0^\typ)^k)$ that 
\[
\begin{split}
& f(\bZ_{[k]}^N(t))\\
&-
\int_0^t
\sum_{j \in [k]}
\biggl [
\sum_{i \in [\typ]} b^i(\bZ_j^N(s), \Pi^N(s)) (f(\bZ_{[k]}^N(s) + e_{j,i}) - f(\bZ_{[k]}^N(s)))\\
&
+
\sum_{i \in [\typ]} d^i(\bZ_j^N(s), \Pi^N(s)) (f(\bZ_{[k]}^N(s) - e_{j,i}) - f(\bZ_{[k]}^N(s)))\\
&
+
\sum_{i, \ell \in [\typ], \ell \ne i} m^{i,\ell}(\bZ_j^N(s), \Pi^N(s)) (f(\bZ_{[k]}^N(s) + e_{j,\ell} - e_{j,i}) - f(\bZ_{[k]}^N(s)))
\biggr ] \, ds \\
\end{split}
\]
is a martingale. 

From Theorem~\ref{thm:conv}, we have that $(\Pi^N)_{N \in \N}$ converges in probability to $v$, so any subsequential limit $\bZ_{[k]} := (\bZ_1^\infty, \ldots, \bZ_k^\infty)$ is such that for 
any function $f \in C_c((\N_0^\typ)^k)$,
\[
\begin{split}
& f(\bZ_{[k]}^\infty(t))\\
&-
\int_0^t
\sum_{j \in [k]}
\biggl [
\sum_{i \in [\typ]} b^i(\bZ_j^\infty(s), v(s)) (f(\bZ_{[k]}^\infty(s) + e_{j,i}) - f(\bZ_{[k]}^\infty(s)))\\
&
+
\sum_{i \in [\typ]} d^i(\bZ_j^\infty(s), v(s)) (f(\bZ_{[k]}^\infty (s) - e_{j,i}) - f(\bZ_{[k]}^\infty(s)))\\
&
+
\sum_{i, \ell \in [\typ], \ell \ne i} m^{i,\ell}(\bZ_j^\infty(s), v(s)) (f(\bZ_{[k]}^\infty(s) + e_{j,\ell} - e_{j,i}) - f(\bZ_{[k]}^\infty(s)))
\biggr ] \, ds \\
\end{split}
\]
is a martingale (comparisons with $(\bR^N)$ establish the necessary uniform integrability).  This completes the proof.
\end{proof}

\section{A computationally tractable special case: locally simple with moment-mediated interactions}
\label{sec:examples}

In this section, we study a special case of the mean-field interacting MTBDP defined in \S\ref{sec:theory} that is amenable to calculations in the context of a phylogenetic birth-death model.
We specialize to a case with no local interactions, and with global interactions mediated by moments of the limiting transition probability $v(t)$ defined by \eqref{eq:limitode}.
This class of processes is rich enough to model both carrying capacity and frequency-dependent selection, and does not add undue computational complexity. 

\begin{example}[Simple MTBDP with moment-mediated mean-field interactive death rates]
\label{ex:linearmtbd}

Consider the MTBDP with transition rates \begin{align*}
	b^i(\by, \nu) = y_i \lambda_i,\quad
	d^i(\by, \nu) = y_i \tilde\mu_i\left(\textstyle\sum_{\by\in \N_0^D} \by \nu_{\{\by\}} \right),\quad 
    m^{i,j}(\by, \nu) = y_i \Gamma_{i,j},
\end{align*}
$i,j\in[D],\ j\neq i$, where $\blambda\in \nnR^{\typ}$, $\Gamma \in \R^{\typ \times \typ}$ (with $\Gamma\bone = \bzero$, and non-negative off-diagonal entries), and $\tilde\bmu\in C(\R_+^D,\R_+^D)$.
Set $\br(t)\coloneqq \sum_{\by\in\N_0^D} \by \, v_{\{\by\}}(t)$, where $(v(t))_{t\ge 0}$, the solution to~\eqref{eq:limitode}, is the limit of the empirical distribution processes $(\Pi^N(t))_{t \ge 0}$. Then $\br=(\br(t))_{t\geq 0}$ is the first-moment process and it solves the finite, closed system, \begin{equation}
	\label{eq:moment}
	\begin{split}
		r_i(t)' &= (\lambda_i - \tilde\mu_i(\br(t))) \, r_i(t) + \sum_{j=1}^{\typ}\Gamma_{ji} r_j(t), \quad i=1,\dots,\typ\\
		\br(0) &= \br_0,
	\end{split}
\end{equation}
where $\br_0$ is the expected initial state. 
Note that a solution of \eqref{eq:limitode} has finite expectation, via Lemma~\ref{lem:dominationbirth}.
\end{example}
Example~\ref{ex:linearmtbd} specializes the general mean-field interactions considered in Section~\ref{sec:theory} to a mean-field interaction mediated by the expected state vector (the first moment of the state distribution).
In that case, the interaction field is the solution of the finite-dimensional nonlinear moment equation \eqref{eq:moment}, so we can bypass solving the full infinite-dimensional nonlinear forward equation \eqref{eq:limitode}.

\begin{example}[Linear moment interaction]
	\label{ex:linear}
	As a simple and biologically interpretable example of the special case of Example~\ref{ex:linearmtbd}, we take $\tilde\bmu(\br(t)) = \bmu + W \br(t)$, where $\bmu \in \nnR^\typ$ is constant and the matrix $W\in\nnR^{\typ \times \typ}$ parameterizes the interaction.
	If $W$ is the matrix of $1$s, then each element of the $\typ$-vector $W \br(t)$ is the expected total population size of the focal process at time $t$, and death rates increase with this total size, enforcing a carrying capacity.
	Otherwise, the death rates are also sensitive to the expected relative frequency of each type.
	For example, if $W$ is diagonally dominant, then the model includes negative frequency-dependent selection.	
	Technically, to satisfy Assumption~\ref{ass:growthlip}, we require that this linear term is truncated above some value of the expected total size.
    In practice, we take this cut-off to be very large, such that the numerical results below are not impacted.
\end{example}

\subsection{Steady states induced by mean-field interaction}

While the simple MTBDP displays only trivial steady states (or constant ones, if it is critical), the MTBDP with mean-field interaction admits more interesting behavior.
Steady-state behavior can be examined by imposing a criticality condition on the self-consistent field.
For Example~\ref{ex:linear}, steady states $\br^\ast\in\nnR^\typ$ satisfy
\begin{equation}
	\label{eq:criticality}
	\left(\text{diag}(\blambda - \bmu - W\br^\ast) + \Gamma\right)\br^\ast = 0.
\end{equation}
Nontrivial solutions of this system of nonlinear algebraic equations for the critical field can be found numerically with standard root-finding methods, and are indeed steady states as long as the process is supercritical when the field vanishes.
For results on steady-state solutions in strongly interacting MTBDPs, see \citet{dessalles2018exact}.

\subsection{Numerical examples}

\begin{figure}[ht]
	\centering
	\includegraphics[width=1.\textwidth]{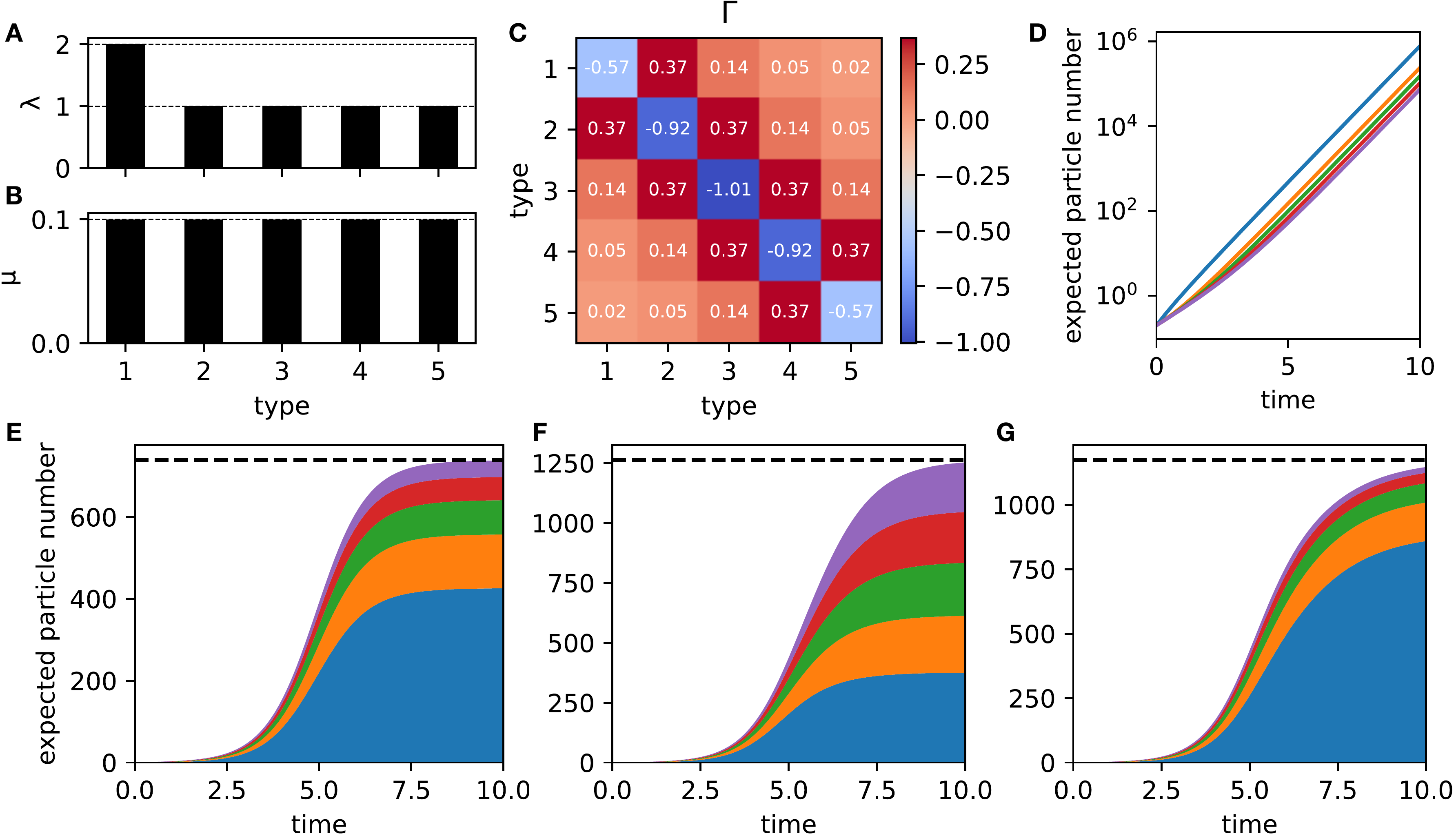}
	\caption{
	Numerical solutions of the self-consistent field $\br$ for the MTBDP with moment-mediated mean-field interaction (Example~\ref{ex:linear}).
	In these examples, $\typ=5$.
	{(\bf A.)} Birth rates $\blambda$, with type 1 elevated above the others.
	{(\bf B.)} Death rate component $\bmu$, the same for all types.
	{(\bf C.)} Type transition rate matrix $\Gamma$, of Toeplitz form, so that mutations between neighboring states are more likely.
	{(\bf D.)} Expected particle count of each of the 5 types (colors) in the absence of any mean-field interaction, showing supercritical growth.
	{\bf E-F} show stacked expected particle count of each of the 5 types, with various mean-field interactions of the form Example~\ref{ex:linear}, with $\|W\|_{\mathrm{F}} = 0.01$ in all cases.
	The dashed lines indicate the critical fields $\br^\ast$ computed by solving \eqref{eq:criticality}.
	{(\bf E.)} Carrying capacity: $W \, \propto \, J$ (with $J$ denoting the $\typ \times \typ$ matrix of $1$s).
	{(\bf F.)} Negative frequency-dependent selection: $W \, \propto \, I$.
	{(\bf G.)} Positive frequency-dependent selection: $W \, \propto \, J - \sfrac{3}{5} \, I$.
	}
	\label{fig:moments}
\end{figure}

For Example~\ref{ex:linear}, the nonlinear moment equation~\eqref{eq:moment} is of Ricatti type, with only quadratic nonlinearities.
Figure~\ref{fig:moments} shows numerical results for the self-consistent field $\br$ of Example~\ref{ex:linear} with $\typ=5$ types.
The field in this case represents the vector of expected particle counts over the 5 types.
These three examples model carrying capacity, negative frequency-dependent selection, and positive frequency-dependent selection, and all use the same rate parameters $\blambda, \bmu, \Gamma$ (Figure~\ref{fig:moments}A-C) but different $W$ matrices.
Without mean-field interaction ($W = 0$), this MTBDP is supercritical, and the expected particle counts grow exponentially (Figure~\ref{fig:moments}D).
One particle type has a higher birth rate than the others, so it grows faster.
With a carrying capacity interaction (Figure~\ref{fig:moments}E), the population reaches a stationary phase due to a mean-field interaction that increases death rates linearly with the expected population size.
With negative frequency-dependent selection (via a diagonally dominant $W$), the types are more balanced (Figure~\ref{fig:moments}F) because the death rate of a given type is suppressed only by growth of that type.
With positive frequency-dependent selection (via a diagonally non-dominant $W$), the death rate of a given type is less suppressed by growth of that type than the others, leading to an enhancing effect on the type with the birth rate advantage (Figure~\ref{fig:moments}G).

A Python implementation producing the results above is available at \url{https://github.com/WSDeWitt/mfbd}.
This code is written in JAX \citep{jax2018github} and relies on the Diffrax package \citep{kidger2021on} for numerical ODE solutions.
Specifically, to solve Riccati-type ODEs we use the Dormand-Prince 8/7 method \citep{prince1981high}---a high-accuracy explicit Runge-Kutta solver---with Hermite interpolation for dense evaluation in the time domain.
To adapt step sizes we use an I-controller \citep[see][\S II.4]{hairer2008solving-i}.
To solve the nonlinear algebraic equations for the critical field, we use root finding with automatic differentiation in JAXopt \citep{jaxopt_implicit_diff}.

\subsection{Integrating mean-field interactions in phylogenetic birth-death models}

Phylogenetic birth-death models augment the simple MTBDP with a sampling process that results in partially observed histories, and are considered as generative models for phylogenetic trees.
They add two additional parameters: the sampling probability $\rho$ gives the probability that any given particle at a specified final sampling time (the present) is sampled, and the fossilization probability $\sigma$ gives the probability that a death event before the present is observed.
The tree is then partially observed by pruning out all subtrees that are not ancestral to a sampled tip or fossil.

Computing likelihoods for rate parameters on phylogenetic trees requires marginalizing out all possible unobserved sub-histories, conditioned on the partially observed history.
We briefly outline this calculation, augmented with mean-field interactions.
We use notation like that of \citet{Kuhnert2016} and \citet{BaridoSottani2018}.
Given the parameters for the system in Example~\ref{ex:linearmtbd}, and measuring time backward from the present sampling time, the probability density requires solving three coupled initial value problems (the standard case without mean-field interactions solves two systems).

First, the self-consistent fields $\br(t)$ are calculated as in Example~\ref{ex:linear} by solving a $\typ$-dimensional initial value problems (we reverse time such that the process starts at the tree root time $\tau>0$, and ends at $t=0$).
Next, we need as an auxiliary calculation the probability $p_i(t)$ that a particle of type $i$ at time $t$ (before the present) will not be observed in the tree---that is, it will not be sampled and will not fossilize.
These are given by the system of backward equations (of Riccati type)
\begin{equation}
	\label{eq:nonobs}
	\begin{split}
	p_i'(t) &= \lambda_i p_i(t)^2 - \left(\lambda_i + \mu_i + \sum_{j=1}^\typ W_{ij}r_j(t)\right) p_i(t)\\
			 & \quad + \sum_{j=1}^\typ \Gamma_{ij} p_j(t) + (1-\sigma)\left(\mu_i + \sum_{j=1}^\typ W_{ij}r_j(t)\right)\\
	p_i(0) &= 1-\rho,
	\end{split}
\end{equation}
where $\br$ is given as in Example \ref{ex:linearmtbd}.
These are solved on the interval $[0, \tau]$ where $\tau$ is the age of the root of the tree.

Finally, we compute the likelihood contribution for each of $B$ tree branches $b=1,\dots, B \in \N$.
Fixing some branch $b$ with type $i$ spanning the half-open interval $(t_1, t_2]$, let $q_{i}(t)$ denote its branch propagator, defined as the solution of the backward equation
\begin{equation}
	\label{eq:branchprop}
	\begin{split}
	q'_i(t)& = \left(2\lambda_i p_i(t) + \Gamma_{ii} - \lambda_i - \mu_i - \sum_{j=1}^\typ W_{ij}r_j(t)\right)q_i(t)\\
	q_i(t_1)& =
	\begin{cases}
		\rho, \quad &\text{if branch $b$ leads to a sample at $t_1=0$}\\
		\sigma\mu_i, \quad &\text{if branch $b$ leads to a fossil at $t_1>0$}\\
		\lambda_i q_{\mathrm{left}}(t_1)q_{\mathrm{right}}(t_1), \quad &\text{if branch $b$ splits at time $t_1>0$}\\
		\Gamma_{ij}q_j(t_1), \quad &\text{if branch $b$ transitions to type $j$ at time $t_1>0$}
	\end{cases}
\end{split}
\end{equation}
where $q_{\mathrm{left}}$ and $q_{\mathrm{right}}$ denote the propagators of the left and right children of branch $b$.
This system is coupled via the boundary conditions for each branch, and can be solved recursively by post-order tree traversal, yielding the tree likelihood accumulated at the root.

Standard phylogenetic birth-death models are recovered by setting $W=0$, and only solving the $\bp$ and $\bq$ systems.
By solving $\bp$, $\bq$, and $\br$ systems in the case $W\ne 0$, it is possible to compute tree likelihoods under phylogenetic birth-death processes that model interactions, while maintaining the efficient post-order calculation of likelihoods.

\section{Discussion} \label{sec:discussion}

Incorporating interactions in birth-death processes is challenging for inference applications, but we summarize some developments.
\citet{Crawford2014} developed techniques based on continued fraction representations of Laplace convolutions to calculate transition probabilities for general single-type birth-death processes, without state space truncation.
\citet{ho2018birth} calculate transition probabilities for the \emph{birth/birth-death} process---a restricted bivariate case where the death rate of one type vanishes, but rates may be otherwise nonlinear.
\citet{Xu2015} use branching process approximations of birth-death processes and generating-function machinery \citep{wilf2005generatingfunctionology} for moment estimation.
\citet{casanova2021} study single-type branching processes with strong interactions, restricted to a regime in which duality methods can be used to characterize the stationary distribution.

Instead of the strong interactions considered in the above work, we have introduced an MTBDP with mean-field interactions.
This mean-field system restores (in the limit) the computational tractability of the non-interacting case.
We have established the fairly general conditions under which this process is well-defined, demonstrated how to perform mean-field calculations in the context of a phylogenetic birth-death model, and provided an efficient software implementation.
While we were motivated by evolutionary dynamics of antibodies in germinal centers, we also foresee applications to other somatic evolution settings, such as tumor evolution and developmental lineage tracing, and to experimental microbial evolution.
While we have outlined how to evaluate likelihoods for phylogenetic birth-death models with mean-field interaction, we leave inference on biological data for future work.

The moment-mediated interactions we study allow for direct solution of self-consistent fields via a nonlinear moment equation, which is amenable to standard numerical ODE techniques.
Mean-field calculations in physical applications (typically on continuous spaces with nonlinear PDEs) often rely on the \emph{self-consistent field method}, which solves a sequence of linear systems with an external field that converges to a fixed point (for example, the Hartree-Fock, and density-functional theories for quantum many-body systems) \citep{yasodharan2021four, RevModPhys.68.13}.
Such methods tacitly assume a contractive mapping holds for this procedure, so that, by the Banach Fixed-Point Theorem, the field converges to a unique point.
In practice, the method can suffer from slow convergence, non-convergence, or even divergence of the iterates, although there are several regularization techniques for controlling these issues.
Our direct solution for the moment-mediated case avoids these issues.

We have suppressed explicit time dependence in the particle-wise birth and death rates $\blambda$ and $\bmu$ for notational compactness, but all the results of \S\ref{sec:theory} and \S\ref{sec:examples} extend to the inhomogeneous case $\blambda(t)$ and $\bmu(t)$ with suitable continuity assumptions in the time domain.
We note, however, that our mean-field approach involves effective time-dependence in the rates even if the intrinsic rates are not explicitly time-dependent.
This effective time-dependence arises from specifying a finite number of dynamical parameters (i.e., the rates and the interaction matrix $W$) that uniquely determine an effective field via the condition of self-consistency, Theorem~\ref{thm:conv}.

Finally, we notice that our mean-field system of $N$ interacting replica trees has a self-similarity property: if we consider a subset of $N$ particles from one of the replicas at time $t>0$, this looks like the starting configuration of a new $N$-system.
This suggests that our mean-field model could also be used as an approximation for strong interactions within a single MTBDP.
However, the appropriate notions of convergence and exchangeability are less clear in this case.
The validity of a mean-field approximation for a single self-interacting MTBDP would seem to involve a delicate balance of quenched disorder from early times when the process is small, on the one hand, with the limiting mean-field interaction when the process is large, on the other hand.
We save these questions for future work.

\section*{Acknowledgments}

WSD thanks Erick Matsen and Gabriel Victora for discussions on carrying capacity and competitive interactions in germinal center evolution, Yun Song for suggesting fixed-point methods for self-consistency calculations, and Volodymyr Minin for discussions on moment-based techniques for birth-death processes.
We are grateful to the anonymous reviewers for several suggestions which improved the exposition.
WSD was supported by a Fellowship in Understanding Dynamic and Multi-scale Systems from the James S. McDonnell Foundation.
EH was funded by the Citadel Fellowship of the Statistics Department at the University 
of California at Berkeley.
SH was funded by the Deutsche Forschungsgemeinschaft (DFG, German Research Foundation) -- Projektnummer 449823447.

\addtocontents{toc}{\protect\setcounter{tocdepth}{2}}
\bibliographystyle{elsarticle-harv}
\bibliography{reference}

\end{document}